\documentclass{article}%
\usepackage{graphicx}
\usepackage{amsmath}
\usepackage{amsfonts}
\usepackage{amssymb}
\usepackage{setspace}
\usepackage[hmargin=3.5cm,vmargin=3.5cm]{geometry}%
\setcounter{MaxMatrixCols}{30}
%TCIDATA{OutputFilter=latex2.dll}
%TCIDATA{Version=5.50.0.2953}
%TCIDATA{CSTFile=LaTeX article (bright).cst}
%TCIDATA{Created=Tue Apr 06 14:54:02 2004}
%TCIDATA{LastRevised=Wednesday, January 09, 2013 18:22:53}
%TCIDATA{<META NAME="GraphicsSave" CONTENT="32">}
%TCIDATA{<META NAME="SaveForMode" CONTENT="1">}
%TCIDATA{BibliographyScheme=Manual}
%TCIDATA{<META NAME="DocumentShell" CONTENT="General\Blank Document">}
%TCIDATA{Language=American English}
%TCIDATA{PageSetup=72,57,72,72,0}
%BeginMSIPreambleData
\providecommand{\U}[1]{\protect\rule{.1in}{.1in}}
%EndMSIPreambleData
\providecommand{\U}[1]{\protect\rule{.1in}{.1in}}
\providecommand{\U}[1]{\protect\rule{.1in}{.1in}}
\providecommand{\U}[1]{\protect\rule{.1in}{.1in}}
\providecommand{\U}[1]{\protect\rule{.1in}{.1in}}
\providecommand{\U}[1]{\protect\rule{.1in}{.1in}}
\providecommand{\U}[1]{\protect\rule{.1in}{.1in}}
\providecommand{\U}[1]{\protect\rule{.1in}{.1in}}
\providecommand{\U}[1]{\protect\rule{.1in}{.1in}}
\providecommand{\U}[1]{\protect\rule{.1in}{.1in}}
\providecommand{\U}[1]{\protect\rule{.1in}{.1in}}
\providecommand{\U}[1]{\protect\rule{.1in}{.1in}}
\providecommand{\U}[1]{\protect\rule{.1in}{.1in}}
\providecommand{\U}[1]{\protect\rule{.1in}{.1in}}
\providecommand{\U}[1]{\protect\rule{.1in}{.1in}}
\providecommand{\U}[1]{\protect\rule{.1in}{.1in}}
\providecommand{\U}[1]{\protect\rule{.1in}{.1in}}
\providecommand{\U}[1]{\protect\rule{.1in}{.1in}}
\providecommand{\U}[1]{\protect\rule{.1in}{.1in}}
\providecommand{\U}[1]{\protect\rule{.1in}{.1in}}
\providecommand{\U}[1]{\protect\rule{.1in}{.1in}}
\providecommand{\U}[1]{\protect\rule{.1in}{.1in}}
\providecommand{\U}[1]{\protect\rule{.1in}{.1in}}
\providecommand{\U}[1]{\protect\rule{.1in}{.1in}}
\providecommand{\U}[1]{\protect\rule{.1in}{.1in}}
\providecommand{\U}[1]{\protect\rule{.1in}{.1in}}
\providecommand{\U}[1]{\protect\rule{.1in}{.1in}}
\providecommand{\U}[1]{\protect\rule{.1in}{.1in}}
\providecommand{\U}[1]{\protect\rule{.1in}{.1in}}
\providecommand{\U}[1]{\protect\rule{.1in}{.1in}}
\providecommand{\U}[1]{\protect\rule{.1in}{.1in}}
\providecommand{\U}[1]{\protect\rule{.1in}{.1in}}
\providecommand{\U}[1]{\protect\rule{.1in}{.1in}}
\providecommand{\U}[1]{\protect\rule{.1in}{.1in}}
\providecommand{\U}[1]{\protect\rule{.1in}{.1in}}
\providecommand{\U}[1]{\protect\rule{.1in}{.1in}}
\providecommand{\U}[1]{\protect\rule{.1in}{.1in}}
\providecommand{\U}[1]{\protect\rule{.1in}{.1in}}

\setcounter{page}{1}
\setlength{\textheight}{21.6cm}
\setlength{\textwidth}{14cm}
\setlength{\oddsidemargin}{1cm}
\setlength{\evensidemargin}{1cm}
\pagestyle{myheadings}
\thispagestyle{empty}
\newtheorem{theorem}{Theorem}
{}

\newtheorem{definition}{Definition}

\newtheorem{lemma}{Lemma}
{}

\newtheorem{proposition}{Proposition}

\newenvironment{proof}[1][Proof]{\textbf{#1.} }{\ \rule{0.5em}{0.5em}}

\oddsidemargin 1.0cm \evensidemargin 1.0cm
\voffset -1cm
\topmargin 0.1cm
\headheight 0.5cm
\headsep 1.5cm
\begin{document}

\title{On the Simplicity of the Eigenvalues of the Non-self-adjoint \ Mathieu-Hill
Operators }
\author{O. A. Veliev\\{\small Depart. of Math., Dogus University, Ac\i badem, Kadik\"{o}y, \ }\\{\small Istanbul, Turkey.}\ {\small e-mail: oveliev@dogus.edu.tr}}
\date{}
\maketitle

\begin{abstract}
We find conditions on the potential of the non-self-adjoint \ Mathieu-Hill
operator such that the all eigenvalues of the periodic, antiperiodic,
Dirichlet and Neumann boundary value problems are simple.

Key Words: Mathieu-Hill operator, Simple eigenvalues.

AMS Mathematics Subject Classification: 34L05, 34L20.

\end{abstract}

\section{Introduction and Preliminary Facts}

Let $P(q)$, $A(q),$ $D(q)$, $N(q)$ be the operators in $L_{2}[0,\pi]$
associated with the equation%

\begin{equation}
-y^{^{\prime\prime}}(x)+q(x)y(x)=\lambda y(x)
\end{equation}
and the periodic
\begin{equation}
y(\pi)=y(0),\text{ }y^{^{\prime}}(\pi)=y^{^{\prime}}(0),
\end{equation}
antiperiodic
\begin{equation}
y(\pi)=-y(0),\text{ }y^{^{\prime}}(\pi)=-y^{^{\prime}}(0),
\end{equation}
Dirichlet
\begin{equation}
y(\pi)=y(0)=0,
\end{equation}
Neumann
\begin{equation}
y^{^{\prime}}(\pi)=y^{^{\prime}}(0)=0
\end{equation}
boundary conditions respectively.

\ It is well known that the spectra of the operators $P(q)$ and $A(q)$ consist
of the eigenvalues $\lambda_{2n}$ and $\lambda_{2n+1}$, called as periodic and
antiperiodic eigenvalues, that are the roots of
\begin{equation}
F(\lambda)=2\text{ }\And\text{ }F(\lambda)=-2,
\end{equation}
where $n=0,1,...,$ $F(\lambda)=:\varphi^{^{\prime}}(\pi,\lambda)+\theta
(\pi,\lambda)$ is the Hill discriminant and $\varphi(x,\lambda),$
$\theta(x,\lambda)$ are the solutions of the equation (1) satisfying the
initial conditions
\begin{equation}
\theta(0,\lambda)=\varphi^{^{\prime}}(0,\lambda)=1,\quad\theta^{^{\prime}%
}(0,\lambda)=\varphi(0,\lambda)=0.
\end{equation}
The eigenvalues of the operators $D(q)$ and $N(q)$, called as Dirichlet and
Neumann eigenvalues, are the roots of
\begin{equation}
\varphi(\pi,\lambda)=0\text{ }\And\text{ \ }\theta^{^{\prime}}(\pi,\lambda)=0
\end{equation}
respectively. The spectrum of the operator $L(q)$ associated with (1) and the
boundary conditions
\begin{equation}
y(2\pi)=y(0),\text{ }y^{^{\prime}}(2\pi)=y^{^{\prime}}(0)
\end{equation}
is the union of the periodic and antiperiodic eigenvalues. In other words, the
spectrum of $L(q)$ consist of the eigenvalues $\lambda_{n}$ for $n=0,1,....$
that are the roots of the equation
\begin{equation}
(F(\lambda)-2)(F(\lambda)+2)=0.
\end{equation}

The operators $P(q)$, $A(q),$ $D(q)$ and $N(q)$ are denoted respectively by
$P(a,b)$, $A(a,b),$ $D(a,b)$ and $N(a,b)$ if%

\begin{equation}
q(x)=ae^{-i2x}+be^{i2x},
\end{equation}
where $a$ and $b$ are complex numbers. If $b=a$ then, for simplicity of the
notations, these operators are redenoted by $P(a)$, $A(a),$ $D(a)$ and $N(a).$
The eigenvalues of $P(a)$ and $A(a)$ are denoted by $\lambda_{2n}(a)$ and
$\lambda_{2n+1}(a)$ for $n=0,1,....$

We use the following two classical theorems (see p.8-9 of [8] and p.34-35 of [6]).

\begin{theorem}
If $q(x)$ is an even function, then $\varphi(x,\lambda)$ is an odd function
and $\theta(x,\lambda)$ is an even function. Periodic solutions are either
$\varphi(x,\lambda)$ or $\theta(x,\lambda)$ unless all solutions are periodic
(with period $\pi$ or $2\pi$). Moreover, the following equality holds
\begin{equation}
\varphi^{^{\prime}}(\pi,\lambda)=\theta(\pi,\lambda).
\end{equation}

\end{theorem}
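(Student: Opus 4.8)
The plan is to treat the three assertions in turn, writing $\Phi(x,\lambda)$ for the fundamental matrix of (1) with first row $(\theta(x,\lambda),\varphi(x,\lambda))$ and second row $(\theta'(x,\lambda),\varphi'(x,\lambda))$, so that by (7) one has $\Phi(0,\lambda)=I$; recall also that in the present setting $q$ is $\pi$-periodic.

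\textbf{Parity of $\varphi$ and $\theta$.} The starting point is that if $y(x)$ solves (1) then, since $q$ is even, so does $y(-x)$. Applying this to $y=\theta$, the function $x\mapsto\theta(-x,\lambda)$ solves (1) and has, by (7), the same value $1$ and derivative $0$ at $x=0$ as $\theta(x,\lambda)$; uniqueness for the initial value problem gives $\theta(-x,\lambda)=\theta(x,\lambda)$. Likewise $x\mapsto-\varphi(-x,\lambda)$ solves (1) with value $0$ and derivative $1$ at $x=0$, hence equals $\varphi(x,\lambda)$, so $\varphi$ is odd. Differentiating, $\theta'$ is odd and $\varphi'$ is even; in particular $\theta(-\pi,\lambda)=\theta(\pi,\lambda)$, $\varphi(-\pi,\lambda)=-\varphi(\pi,\lambda)$, $\theta'(-\pi,\lambda)=-\theta'(\pi,\lambda)$ and $\varphi'(-\pi,\lambda)=\varphi'(\pi,\lambda)$.

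\textbf{The identity (12).} Since $q$ is $\pi$-periodic, the matrix $\Phi(x+\pi,\lambda)$ solves the same first order system equivalent to (1) as $\Phi(x,\lambda)$, and coincides at $x=0$ with $\Phi(x,\lambda)M(\lambda)$, where $M(\lambda):=\Phi(\pi,\lambda)$ is the monodromy matrix with rows $(\theta(\pi,\lambda),\varphi(\pi,\lambda))$ and $(\theta'(\pi,\lambda),\varphi'(\pi,\lambda))$; by uniqueness $\Phi(x+\pi,\lambda)=\Phi(x,\lambda)M(\lambda)$, and setting $x=-\pi$ gives $\Phi(-\pi,\lambda)=M(\lambda)^{-1}$. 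As the Wronskian $\theta\varphi'-\theta'\varphi$ is constant in $x$ and equals $1$ at $x=0$, one has $\det M(\lambda)=1$, so $M(\lambda)^{-1}$ has rows $(\varphi'(\pi,\lambda),-\varphi(\pi,\lambda))$ and $(-\theta'(\pi,\lambda),\theta(\pi,\lambda))$. On the other hand, the parity relations above show $\Phi(-\pi,\lambda)$ has $(1,1)$ entry $\theta(\pi,\lambda)$. Comparing the two expressions for the $(1,1)$ entry of $\Phi(-\pi,\lambda)$ yields $\theta(\pi,\lambda)=\varphi'(\pi,\lambda)$, i.e.\ (12); in particular $F(\lambda)=2\theta(\pi,\lambda)$.

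\textbf{Periodic and antiperiodic solutions.} A solution of period $\pi$ (respectively, an antiperiodic one) corresponds, via its Cauchy data at $0$, to an eigenvector of $M(\lambda)$ for the eigenvalue $1$ (respectively $-1$), and all solutions are of this type exactly when $M(\lambda)=I$ (respectively $M(\lambda)=-I$). Assuming this degenerate case does not occur, let $\lambda$ be such that (1) has a solution of period $\pi$, so $F(\lambda)=2$ and hence $\theta(\pi,\lambda)=1$ by the previous step. Then $\det M(\lambda)=1$ forces $\varphi(\pi,\lambda)\theta'(\pi,\lambda)=\theta(\pi,\lambda)^{2}-1=0$, and since $M(\lambda)\ne I$ exactly one of $\varphi(\pi,\lambda)$, $\theta'(\pi,\lambda)$ vanishes. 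If $\varphi(\pi,\lambda)=0$, then $M(\lambda)-I$ is lower triangular with nonzero $(2,1)$ entry, so every eigenvector for $1$ has vanishing first coordinate and the periodic solution is a multiple of $\varphi$; and $\varphi$ is indeed periodic because $\varphi(\pi,\lambda)=0=\varphi(0,\lambda)$ and $\varphi'(\pi,\lambda)=\theta(\pi,\lambda)=1=\varphi'(0,\lambda)$. If instead $\theta'(\pi,\lambda)=0$, the same argument with the coordinates interchanged shows the periodic solution is a multiple of $\theta$, which is periodic since $\theta(\pi,\lambda)=1=\theta(0,\lambda)$ and $\theta'(\pi,\lambda)=0=\theta'(0,\lambda)$. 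The antiperiodic case $F(\lambda)=-2$ is handled identically with $1$ replaced by $-1$ throughout. The main obstacle is this last step: one must correctly pass between ``periodic solution'' and ``eigenvector of the monodromy matrix'', use (12) — which is precisely what forces $M(\lambda)$ to be triangular when $F(\lambda)=\pm2$, and hence pins the periodic solution to one of $\varphi,\theta$ — and not overlook the genuinely exceptional case $M(\lambda)=\pm I$, in which the whole two-dimensional solution space is periodic or antiperiodic and neither $\varphi$ nor $\theta$ is distinguished; one should also keep the normalization (7) in force so the ``value and derivative at $0$'' comparisons in the first two steps go through verbatim.
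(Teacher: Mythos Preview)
Your proof is correct. The parity argument, the monodromy identity $\Phi(-\pi,\lambda)=M(\lambda)^{-1}$ combined with the parity of the entries of $\Phi(-\pi,\lambda)$ to get (12), and the triangularization of $M(\lambda)$ when $F(\lambda)=\pm 2$ to pin the (anti)periodic solution on $\varphi$ or $\theta$, are all clean and complete.

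As for comparison: the paper does not actually prove Theorem 1. It states it as a classical fact, cites Magnus--Winkler [8] and Eastham [6], and only remarks that ``the proofs pass through for the complex-valued potentials without any change.'' Your argument is essentially the standard one found in those references (parity via uniqueness of the initial value problem, then exploitation of the monodromy matrix and the Wronskian identity), so you have supplied exactly the details the paper chose to outsource. There is no substantive difference in approach to discuss; you simply wrote out what the cited sources contain.
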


\begin{theorem}
For all $n$ and for any nonzero $a$ the geometric multiplicity of the
eigenvalue $\lambda_{n}(a)$ of the operators $P(a)$ and $A(a)$ is $1$ ( that
is, there exists one eigenfunction corresponding to $\lambda_{n}(a)$) and the
corresponding eigenfunction is either $\varphi(x,\lambda_{n}(a))$ or
$\theta(x,\lambda_{n}(a)),$ where, for simplicity of the notations, the
solutions of the equation%
\begin{equation}
-y^{^{\prime\prime}}(x)+(2a\cos2x)y(x)=\lambda y(x)
\end{equation}
satisfying (7) are denoted also by $\varphi(x,\lambda)$ and $\theta
(x,\lambda)$.
\end{theorem}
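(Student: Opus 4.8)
The plan is to prove the two parts of the assertion in turn: first that each eigenspace is one-dimensional, then that its generator is $\theta$ or $\varphi$. Since in the present situation $q(x)=2a\cos 2x$ is an even $\pi$-periodic function, an eigenfunction of $P(a)$ (resp. $A(a)$) at an eigenvalue $\lambda$ extends uniquely to a $\pi$-periodic (resp. $\pi$-antiperiodic) solution of (13) on $\mathbb{R}$, and conversely; hence the geometric multiplicity of $\lambda_{n}(a)$ equals the dimension of the space of $\pi$-periodic (resp. $\pi$-antiperiodic) solutions of (13), and it suffices to show that for $a\neq 0$ this dimension is at most $1$. Being at least $1$, it is then exactly $1$.

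To bound the dimension, expand a $\pi$-periodic solution as $y=\sum_{k\in\mathbb{Z}}c_{k}e^{2ikx}$ and use $2a\cos 2x=a\,(e^{2ix}+e^{-2ix})$; then (13) becomes the three-term recurrence
\begin{equation}
a\,(c_{k+1}+c_{k-1})=(\lambda-4k^{2})\,c_{k},\qquad k\in\mathbb{Z},
\end{equation}
with $4k^{2}$ replaced by $(2k+1)^{2}$ and $e^{2ikx}$ by $e^{i(2k+1)x}$ for the antiperiodic problem. When $a\neq 0$ this is a genuine second-order recurrence with $|(\lambda-4k^{2})/a|\to\infty$, so by the classical theory of such recurrences it has, near $+\infty$, only a one-dimensional space of decaying (recessive) solutions, every other solution being unbounded. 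The Fourier coefficients of an $L_{2}$ eigenfunction are square-summable, hence decaying, hence lie in that one-dimensional space; and a solution of (13) is determined by its Fourier coefficients, so the eigenspace is at most one-dimensional. This is exactly where $a\neq 0$ enters: for $a=0$ the recurrence decouples and $e^{\pm 2ik_{0}x}$ span a two-dimensional eigenspace at $\lambda=4k_{0}^{2}$.

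Next, let $y$ span the one-dimensional eigenspace at $\lambda=\lambda_{n}(a)$. Because $q$ is even, $y(-x)$ is again a solution of (13), and it is again $\pi$-periodic (resp. $\pi$-antiperiodic), hence it lies in the same eigenspace; therefore $y(-x)=cy(x)$ for a constant $c$, and iterating gives $c^{2}=1$, so $y$ is even or odd. By Theorem 1 the solution $\theta(x,\lambda)$ is even and $\varphi(x,\lambda)$ is odd, and they are linearly independent (their Wronskian equals $1$ by (7)); writing $y=\alpha\theta+\beta\varphi$, evenness of $y$ forces $\beta=0$ and oddness of $y$ forces $\alpha=0$. Thus $y$ is a scalar multiple of $\theta(x,\lambda_{n}(a))$ or of $\varphi(x,\lambda_{n}(a))$, which is the assertion.

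The main obstacle is the dimension bound of the second step: one must know that Mathieu's equation admits no coexistence of two linearly independent periodic, or of two linearly independent antiperiodic, solutions -- equivalently, that the recessive solutions of the recurrence above form a one-dimensional space. The Perron-type asymptotics make this transparent, but a fully rigorous treatment needs the standard estimates separating the dominant and recessive solutions (equivalently, Pincherle's theorem identifying the recessive solution with the convergent continued fraction attached to the recurrence); everything else in the argument is elementary.
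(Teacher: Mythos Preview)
The paper does not give its own proof of this statement: Theorem~2 is quoted from the classical references [6] (Eastham, pp.~34--35) and [8] (Magnus--Winkler), with the remark that the arguments there, written for real $a$, go through unchanged for complex $a$. Your argument is correct and is in substance the classical Ince non-coexistence proof that those references contain: the three-term recurrence for the Fourier coefficients together with the fact that, because $|(\lambda-4k^{2})/a|\to\infty$, its minimal (recessive) solution at $+\infty$ spans a one-dimensional subspace. Your organization differs slightly from the textbook versions, which usually invoke Theorem~1 first to split into even and odd parts and then work with one-sided cosine/sine recurrences on $\mathbb{N}$ (as in (42)--(45) of the paper), whereas you run the two-sided exponential recurrence on $\mathbb{Z}$ directly and deduce the even/odd dichotomy afterwards; the underlying analytic point (Poincar\'e--Perron / Pincherle) is the same.

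One small economy: once you have shown the geometric multiplicity is $1$, the second part is immediate from Theorem~1 without your separate parity argument, since multiplicity $1$ excludes the exceptional clause ``unless all solutions are periodic,'' and Theorem~1 then says the periodic (or antiperiodic) solution is $\varphi$ or $\theta$. Your direct argument via $y(-x)=cy(x)$ is of course also fine.
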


In [8, 6] these theorems were proved for the real-valued potentials. However,
the proofs pass through for the complex-valued potentials without any change.

The spectrum of \ $P(a),$ $A(a),$ $D(a)$, $N(a)$ for $a=0$ are
\[
\{(2k)^{2}:k=0,1,...\},\{(2k+1)^{2}:k=0,1,...\},\{k^{2}:k=1,2,...\},\{k^{2}%
:k=0,1,...\}
\]
respectively. All eigenvalues of $P(0),$ except $0,$ and $A(0)$ are double,
while the eigenvalues of $D(0)$ and $N(0)$ are simple.

We use also the following \ result of [11].

\begin{theorem}
If $ab=cd$, then the Hill discriminants $F(\lambda,a,b)$ and $F(\lambda,c,d)$
(see (6)) for the operators $P(a,b)$ and $P(c,d)$ are the same.
\end{theorem}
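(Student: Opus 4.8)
The plan is to prove that $F(\lambda,a,b)$ depends on the pair $(a,b)$ only through the product $ab$; the theorem then follows by matching products. The engine is the translation invariance of the Hill discriminant, used with a \emph{complex} shift of the independent variable.

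Fix $\lambda$ and let $U(x,\lambda)$ be the fundamental matrix of the system associated with (1) for $q(x)=ae^{-i2x}+be^{i2x}$, normalized by $U(0,\lambda)=I$; its columns are the solution vectors built from $\theta$ and $\varphi$, so $U(\pi,\lambda)$ is the monodromy matrix and $\operatorname{tr}U(\pi,\lambda)=\theta(\pi,\lambda)+\varphi'(\pi,\lambda)=F(\lambda,a,b)$. Since $q$ is entire and $\pi$-periodic, $U(\cdot,\lambda)$ extends to an entire matrix-valued function of $x$ with $\det U(x,\lambda)\equiv1$, and by uniqueness of solutions $U(x+\pi,\lambda)=U(x,\lambda)U(\pi,\lambda)$ for all $x\in\mathbb{C}$. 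For any $c\in\mathbb{C}$ the shifted potential is $q(x+c)=(ae^{-i2c})e^{-i2x}+(be^{i2c})e^{i2x}$; its fundamental matrix normalized at $0$ is $V(x)=U(x+c,\lambda)U(c,\lambda)^{-1}$, so its monodromy is $V(\pi)=U(\pi+c,\lambda)U(c,\lambda)^{-1}=U(c,\lambda)\,U(\pi,\lambda)\,U(c,\lambda)^{-1}$. Taking traces,
\[
F(\lambda,\,ae^{-i2c},\,be^{i2c})=F(\lambda,a,b)\qquad\text{for every }c\in\mathbb{C}.
\]
As $c$ runs over $\mathbb{C}$ the number $\mu:=e^{-i2c}$ runs over all of $\mathbb{C}\setminus\{0\}$, so $F(\lambda,\mu a,\mu^{-1}b)=F(\lambda,a,b)$ for every $\mu\neq0$; in particular $F(\lambda,\cdot,\cdot)$ is constant on each hyperbola $ab=\mathrm{const}$ that avoids the coordinate axes.

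Now I would match products. If $ab=cd$ with $a\neq0$ and $d\neq0$, set $\mu=c/a$; then $\mu a=c$ and $\mu^{-1}b=ab/c=cd/c=d$, hence $F(\lambda,a,b)=F(\lambda,c,d)$. For the remaining non-degenerate case $b\neq0,\ c\neq0$ I would invoke the reflection $x\mapsto-x$, which turns a Floquet solution of the $(a,b)$-equation with multiplier $\rho$ into one of the $(b,a)$-equation with multiplier $\rho^{-1}$; since $\{\rho,\rho^{-1}\}$ is reflection-invariant, $F(\lambda,a,b)=F(\lambda,b,a)$, and this reduces the case to the previous one. Finally, when the common product is $0$ and moreover $a=b=0$ or $c=d=0$, I would let $\mu\to0$ in the identity $F(\lambda,\mu a,\mu^{-1}b)\equiv F(\lambda,a,b)$, using that $F(\lambda,\cdot,\cdot)$ is jointly continuous (indeed holomorphic) on $\mathbb{C}^{2}$ by analytic dependence of the solutions of (1) on the coefficients; this gives $F(\lambda,a,0)=F(\lambda,0,0)$ and, via the reflection symmetry, $F(\lambda,0,b)=F(\lambda,0,0)$, closing all cases.

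The only delicate point is that the conjugation identity for the monodromy is applied with a complex shift $c$; this is legitimate precisely because $q$ is an entire function of $x$, so both $U(x,\lambda)$ and the period relation $U(x+\pi,\lambda)=U(x,\lambda)U(\pi,\lambda)$ persist for complex $x$. A reader preferring to stay on the real axis may instead note that $\mu\mapsto F(\lambda,\mu a,\mu^{-1}b)$ is holomorphic on $\mathbb{C}\setminus\{0\}$ and, by the real-shift instance $\mu=e^{-i2c}$ with $c\in\mathbb{R}$ (the classical base-point independence of the Hill discriminant), constant on the unit circle, hence constant by the identity theorem — so there is no real obstacle either way.
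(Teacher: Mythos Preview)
The paper does not supply its own proof of Theorem~3; the result is simply quoted from reference~[11], so there is no in-paper argument to compare against.

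Your proof is correct. The central identity --- that a translation $x\mapsto x+c$ replaces $(a,b)$ by $(ae^{-i2c},be^{i2c})$ while conjugating the monodromy matrix, hence preserving its trace --- yields $F(\lambda,\mu a,\mu^{-1}b)=F(\lambda,a,b)$ for every $\mu\in\mathbb{C}\setminus\{0\}$, which is precisely the claim that $F$ depends only on the product $ab$ off the coordinate axes. The analytic-continuation variant you offer (constancy on $|\mu|=1$ from the classical real-shift invariance, then the identity theorem in $\mu$) is an equally clean way to reach the same conclusion without invoking complex $x$. One small remark on the degenerate cases: your bookkeeping is slightly tangled, but the substance is fine. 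Once the scaling identity is in hand with $b=0$, you get $F(\lambda,\mu a,0)=F(\lambda,a,0)$ for all $\mu\neq0$, and continuity at $\mu=0$ gives $F(\lambda,a,0)=F(\lambda,0,0)$ for \emph{every} $a$; the reflection $F(\lambda,a,b)=F(\lambda,b,a)$ then yields $F(\lambda,0,b)=F(\lambda,0,0)$ for every $b$. Thus all pairs on the cross $ab=0$ share the common value $F(\lambda,0,0)$, which covers every remaining case at once --- not only the subcase ``$a=b=0$ or $c=d=0$'' that your wording singles out.
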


By Theorem 2 the geometric multiplicity of the eigenvalues of $P(a)$ and
$A(a)$ for any nonzero complex number $a$ is $1.$ However, in the
non-self-adjoint case $a\in\mathbb{C}\backslash\mathbb{R},$ the multiplicity
(algebraic multiplicity) of these eigenvalues, in general, is not equal to
their geometric multiplicity, since the operators $P(a)$ and $A(a)$ may have
associated functions (generalized eigenfunctions). Thus in the non-self-
adjoint case the multiplicity (algebraic multiplicity) of the eigenvalues may
be any finite number when the geometric multiplicity is $1$ or $2.$ Therefore
the investigation of the multiplicity of the eigenvalues for complex-valued
potential is more complicated.

In this paper we find the conditions on $a$ such that the all eigenvalues of
the operators $P(a)$, $A(a),$ $D(a)$ and $N(a)$ are simple, namely we prove
the following

\begin{theorem}
(Main results for the operators $P(a)$, $A(a),$ $D(a)$ and $N(a)$):

$(a)$ If $0<\left\vert a\right\vert \leq\frac{8}{\sqrt{6}},$ then the all
eigenvalues of the operators $A(a)$ and $D(a)$ are simple.

$(b)$ If $0<\left\vert a\right\vert \leq\frac{4}{3},$ then the all eigenvalues
of the operators $P(a)$ and $N(a)$ are simple.
\end{theorem}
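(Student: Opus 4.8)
The plan is to reduce the simplicity of the periodic and antiperiodic eigenvalues to the non‑vanishing of a single integral attached to each Dirichlet or Neumann eigenfunction, and then to verify that non‑vanishing by a Fourier expansion and elementary estimates.

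For the reduction, fix $a\neq0$. The geometric multiplicity of every eigenvalue of the separated problems $D(a),N(a)$ is automatically $1$, and by Theorem 2 the same holds for $P(a),A(a)$, the eigenfunction being $\varphi(x,\lambda)$ or $\theta(x,\lambda)$. Since $\varphi(0,\lambda)=\theta^{\prime}(0,\lambda)=0$, a $\varphi$‑eigenfunction of $P(a)$ or $A(a)$ forces $\varphi(\pi,\lambda)=0$, so $\lambda$ is then a Dirichlet eigenvalue, while a $\theta$‑eigenfunction forces $\theta^{\prime}(\pi,\lambda)=0$, so $\lambda$ is then a Neumann eigenvalue; hence every periodic or antiperiodic eigenvalue is a Dirichlet or a Neumann eigenvalue. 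The adjoint of each of the four operators is the same operator with potential $\overline{q}=2\overline{a}\cos2x$, whose eigenfunction at $\overline{\lambda}$ is $\overline{\varphi(x,\lambda)}$ (resp. $\overline{\theta(x,\lambda)}$); since these operators have compact resolvent, the Fredholm alternative shows that an eigenvalue with $\varphi$‑eigenfunction possesses an associated function — equivalently, fails to be simple — exactly when $\int_{0}^{\pi}\varphi(x,\lambda)^{2}\,dx=0$, and similarly with $\theta$, and this criterion is the same whether $\lambda$ is regarded as an eigenvalue of $D(a)$ (resp. $N(a)$) or of $P(a)$ or $A(a)$. (Equivalently, one may argue through the order of the zero of $F\mp2$, using $\varphi^{\prime}(\pi,\lambda)=\theta(\pi,\lambda)$ and the Wronskian.) Thus part $(a)$ follows once $\int_{0}^{\pi}\varphi(x,\mu)^{2}\,dx\neq0$ for every Dirichlet eigenvalue $\mu$ and $\int_{0}^{\pi}\theta(x,\nu)^{2}\,dx\neq0$ for every antiperiodic Neumann eigenvalue $\nu$, and part $(b)$ once $\int_{0}^{\pi}\theta(x,\nu)^{2}\,dx\neq0$ for every Neumann eigenvalue $\nu$ and $\int_{0}^{\pi}\varphi(x,\mu)^{2}\,dx\neq0$ for every periodic Dirichlet eigenvalue $\mu$.

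Next I would expand the relevant eigenfunctions in Fourier series: $\varphi(x,\mu)=\sum_{m\geq0}c_{m}\sin(2m+1)x$ for an antiperiodic Dirichlet eigenvalue, $\varphi(x,\mu)=\sum_{m\geq1}c_{m}\sin2mx$ for a periodic Dirichlet eigenvalue, $\theta(x,\nu)=\sum_{m\geq0}c_{m}\cos(2m+1)x$ for an antiperiodic Neumann eigenvalue, and $\theta(x,\nu)=c_{0}+\sum_{m\geq1}c_{m}\cos2mx$ for a periodic Neumann eigenvalue (in the last case $c_{0}\neq0$, as the recursion below shows). By orthogonality on $[0,\pi]$ the condition $\int_{0}^{\pi}\varphi^{2}\,dx\neq0$ (resp. $\int_{0}^{\pi}\theta^{2}\,dx\neq0$) becomes $\sum_{m}c_{m}^{2}\neq0$ in the first three cases and $2c_{0}^{2}+\sum_{m\geq1}c_{m}^{2}\neq0$ in the last — a quadratic form, not a squared norm, which is precisely the non‑self‑adjoint difficulty. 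Inserting these expansions into $-y^{\prime\prime}+(2a\cos2x)y=\lambda y$ produces three‑term recursions for $(c_{m})$ with diagonal entries $(2m+1)^{2}$ or $(2m)^{2}$, off‑diagonal entries $a$, and the only anomalies at the initial index: the $\sin x$ and $\cos x$ equations acquire the extra diagonal terms $-ac_{0}$ and $+ac_{0}$, and in the periodic Neumann family $c_{0}$ is coupled to $c_{1}$ with coefficient $a$ while $c_{1}$ is coupled to $c_{0}$ with coefficient $2a$.

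Finally, letting $c_{m_{0}}$ be a coefficient of maximal modulus, the $m_{0}$‑th equation yields $|\lambda-(2m_{0}+1)^{2}|\leq2|a|$ for $m_{0}\geq1$, $|\mu-1|\leq2|a|$ at the $\sin x/\cos x$ initial index, and $|\nu|\leq|a|$ at the constant mode ($m_{0}=0$) of the periodic Neumann family; a bootstrap, using the geometric decay of the tail that $c_{m}=-a(c_{m-1}+c_{m+1})/((2m+1)^{2}-\lambda)$ forces once $\lambda$ is separated from the other squares, sharpens this localization and gives $|c_{m}|\leq|c_{m_{0}}|\,\rho^{|m-m_{0}|}$ with some $\rho=\rho(|a|)<1$, hence $\sum_{m\neq m_{0}}|c_{m}|^{2}<|c_{m_{0}}|^{2}$ (respectively $\sum_{m\geq1}|c_{m}|^{2}<2|c_{0}|^{2}$). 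Then $\bigl|\sum_{m}c_{m}^{2}\bigr|\geq|c_{m_{0}}|^{2}-\sum_{m\neq m_{0}}|c_{m}|^{2}>0$, and likewise $\bigl|2c_{0}^{2}+\sum_{m\geq1}c_{m}^{2}\bigr|>0$, so the eigenvalue is simple; running this over all four families and invoking the reduction gives the theorem. The extremal case, which fixes the constants, is $m_{0}=0$: for the Dirichlet and antiperiodic Neumann families the nearest other square lies at distance $8$ (the eigenvalue near $1$) and the estimate just closes for $|a|\leq8/\sqrt6$, whereas for the periodic Neumann family the constant mode is only at distance $4$ from the $\cos2x$ mode and is coupled to it through the enhanced coefficient $2a$ (i.e. $\sqrt2\,a$ in an orthonormal basis), which forces the smaller bound $|a|\leq4/3$. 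I expect the main obstacle to be exactly this last step for small index: the crude localization $|\lambda-(2m_{0}+1)^{2}|\leq2|a|$ is, near $m_{0}=0$, not by itself strong enough to make the tail small when $|a|$ is as large as $8/\sqrt6$ or $4/3$, so one must either bootstrap it carefully or feed in a finer a priori bound on the lowest eigenvalues, and then sum the geometric majorant for $\sum|c_{m}|^{2}$ without waste — the tightest arithmetic being that of the constant mode of the Neumann problem, where the asymmetric factor $2a$ enters and produces $4/3$.
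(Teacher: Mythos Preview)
Your reduction is correct and matches the paper: a periodic or antiperiodic eigenvalue of geometric multiplicity $1$ is a Dirichlet or Neumann eigenvalue, and it is multiple if and only if the quadratic sum $\sum c_m^2$ of the Fourier coefficients vanishes (equivalently $(\Psi,\overline{\Psi})=0$). Your three-term recursions are also the right ones. For eigenvalues with large index your domination argument $|c_{m_0}|^2>\sum_{m\neq m_0}|c_m|^2$ is exactly the paper's method (its Theorems 11--12), and it works.

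The gap is in the low-index case, and it is not a matter of bookkeeping. Once you assume $\lambda$ is multiple, the very criterion $\sum c_m^2=0$ forces $|c_m|^2\le\tfrac12$ for \emph{every} $m$; so you can never establish $|c_{m_0}|^2>\tfrac12$ under that hypothesis, no matter how carefully you bootstrap. Concretely, for the antiperiodic problem with $\operatorname{Re}\lambda$ near $1$, the crude tail bound $\sum_{k\ge2}|d_k|^2\le 4|a|^2/(8-2|a|)^2<\tfrac12$ only holds for $|a|<4/(1+\sqrt2)\approx1.66$, not $8/\sqrt6\approx3.27$; your heuristic for where $8/\sqrt6$ and $4/3$ come from is therefore off. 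What the paper does instead in this regime is to \emph{accept} that $|d_1|^2\le\tfrac12$ and $|d_2|^2\le\tfrac12$, combine this with the tail smallness $\sum_{k\ge3}|d_k|^2<\varepsilon$ and with $\sum d_k^2=0$ to force $d_2/d_1=\pm i+O(\varepsilon)$, and then divide the first two recursion equations by $d_1$ and $d_2$ and subtract to obtain an identity of the form
\[
\frac{8}{a}=\pm2i-1+\gamma-\frac{d_3}{d_2},\qquad |\gamma|\ \text{small},
\]
whose right side has modulus at most $\sqrt5+o(1)$, contradicting $|8/a|\ge\sqrt6$. The constants $8/\sqrt6$ and $4/3$ come from \emph{this} identity (the latter via $|4/a|\ge3>2\sqrt2+o(1)$ in the periodic Neumann family, where the $\sqrt2$ coupling you noticed indeed appears), not from a single-coefficient domination estimate. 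Your proposal is missing precisely this step.
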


This theorem with Theorem 3 implies

\begin{theorem}
(Main results for the operators $A(a,b)$ and $P(a,b)$):

$(a)$ If $0<\left\vert ab\right\vert \leq\frac{64}{6},$ then the all
eigenvalues of the operator $A(a,b)$ are simple.

$(b)$ If $0<\left\vert ab\right\vert \leq\frac{16}{9},$ then the all
eigenvalues of the operator $P(a,b)$ are simple.
\end{theorem}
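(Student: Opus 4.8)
The plan is to derive Theorem~5 from Theorem~4 by means of the discriminant identity of Theorem~3, with essentially no new analytic work. Fix $a,b\in\mathbb{C}$ with $0<\left\vert ab\right\vert$ and choose a square root $c$ of $ab$; then $c\neq0$ and $ab=c\cdot c$. Since the Hill discriminant $F$ is determined by the equation alone and not by the boundary conditions, Theorem~3 applied to the pairs $(a,b)$ and $(c,c)$ gives
\[
F(\lambda,a,b)=F(\lambda,c,c)=:F(\lambda,c)\qquad\text{for all }\lambda ,
\]
where $F(\lambda,c)$ is the Hill discriminant of $-y^{\prime\prime}(x)+(2c\cos 2x)y(x)=\lambda y(x)$.

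The second ingredient is the classical fact that, because the geometric multiplicity of every periodic and antiperiodic eigenvalue is $1$ (Theorem~2), the algebraic multiplicity of such an eigenvalue $\lambda_{0}$ of $A(q)$ (respectively $P(q)$) equals the order of $\lambda_{0}$ as a zero of $F(\lambda)+2$ (respectively $F(\lambda)-2$); this is valid for an arbitrary, not necessarily even, potential $q$. Hence ``all eigenvalues of $A(q)$ are simple'' is equivalent to ``$F(\cdot)+2$ has only simple zeros,'' and likewise for $P(q)$ with $F(\cdot)-2$; in particular this property of the operator depends on $q$ only through $F$.

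For part $(a)$: the hypothesis $0<\left\vert ab\right\vert \leq\frac{64}{6}$ gives $\left\vert c\right\vert =\sqrt{\left\vert ab\right\vert }\leq\sqrt{64/6}=\frac{8}{\sqrt{6}}$, so Theorem~4$(a)$ applies to $A(c)$ and shows that all zeros of $F(\lambda,c)+2$ are simple. By the displayed identity, all zeros of $F(\lambda,a,b)+2$ are then simple, i.e.\ every antiperiodic eigenvalue of $A(a,b)$ is simple. Part $(b)$ is obtained in exactly the same way, with $A$ replaced by $P$, the function $F(\lambda)+2$ by $F(\lambda)-2$, and Theorem~4$(a)$ by Theorem~4$(b)$; there $\left\vert ab\right\vert \leq\frac{16}{9}$ forces $\left\vert c\right\vert \leq\frac{4}{3}$. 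The Dirichlet and Neumann problems are excluded because their eigenvalues are the zeros of $\varphi(\pi,\lambda)$ and $\theta^{\prime}(\pi,\lambda)$, which are not functions of the product $ab$, so Theorem~3 yields no reduction for $D(a,b)$ and $N(a,b)$.

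The single delicate point is the equivalence ``simple eigenvalue $\Longleftrightarrow$ simple zero of $F\mp2$''; as with Theorems~1 and~2, this is standard and its proof does not use the realness of $q$. Once it is recorded, Theorem~5 is immediate from Theorems~3 and~4. The genuine content --- the quantitative bounds on $\left\vert a\right\vert$ --- has already been spent in the proof of Theorem~4, so the ``main obstacle'' here is bookkeeping rather than analysis: nothing more is needed than to transport the conclusion for $A(c)$ and $P(c)$ back to $A(a,b)$ and $P(a,b)$ via the identity $F(\lambda,a,b)=F(\lambda,c,c)$.
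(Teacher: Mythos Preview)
Your proposal is correct and follows exactly the route the paper indicates: the paper merely says ``This theorem with Theorem~3 implies'' Theorem~5, and your reduction via $c=\sqrt{ab}$ together with the identification of algebraic multiplicities with the orders of the zeros of $F(\lambda)\mp2$ is precisely the intended argument. One small point of exposition: you cite Theorem~2 for the statement that the geometric multiplicity is $1$, but Theorem~2 in the paper is stated only for the even potential $2a\cos2x$, not for general $q(x)=ae^{-2ix}+be^{2ix}$; fortunately your argument does not actually need geometric simplicity, since the equality ``algebraic multiplicity $=$ order of the zero of the characteristic determinant $F(\lambda)\mp2$'' is the standard fact for regular boundary value problems and holds for every $q$, so the transfer from $A(c),P(c)$ to $A(a,b),P(a,b)$ goes through unchanged.
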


Note that there are a lot of papers about the asymptotic analyses and about
the basis property of the root functions of the operators $P(a,b)$ and
$A(a,b)$ (see [1-5, 7, 10] and the references in them). We do not discuss
those papers, since in this paper we consider the another aspects of these
operators and use only Theorems 1-3.

\section{On the Even Potentials}

In this section we analyze, in general, the even potentials. In the paper [9]
the following statements about the connections of the spectra of the operators
$P(q)$, $A(q),$ $D(q)$ and $N(q),$ where $q$ is an even potential, were proved.

\textit{Lemma 1 of [9]. If }$q$\textit{ is an even potential and }$\lambda
$\textit{ is an eigenvalue of both operators }$D(q)$ and $N(q),$\textit{ then
}%
\begin{equation}
F(\lambda)=\pm2,\text{ }\frac{dF}{d\lambda}=0,
\end{equation}
\textit{that is, }$\lambda$\textit{ is a multiple eigenvalue of }$L(q).$

\textit{Proposition 1 of [9]. Let }$q$\textit{ be an even potential.}
\textit{Then }$\lambda$\textit{ is an eigenvalue of }$L(q)$ \textit{if and
only if }$\lambda$\textit{ is an eigenvalue of }$D(q)$ \textit{or} $N(q).$

First using (12) and the Wronskian equality
\begin{equation}
\theta(\pi,\lambda)\varphi^{^{\prime}}(\pi,\lambda)-\varphi(\pi,\lambda
)\theta^{^{\prime}}(\pi,\lambda)=1
\end{equation}
we prove the following improvements of these statements.

\begin{theorem}
Let $q$ be an even complex-valued function. A complex number $\lambda$ is both
a Neumann and Dirichled eigenvalue if and only if it is an eigenvalue of the
operator $L(q)$ with geometric multiplicity $2.$
\end{theorem}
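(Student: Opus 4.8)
The plan is to prove the two implications separately, using the Wronskian identity (16), the relation (12) from Theorem 1, and the characterization of eigenfunctions.

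First I would establish the forward direction. Suppose $\lambda$ is simultaneously a Dirichlet and a Neumann eigenvalue, i.e. $\varphi(\pi,\lambda)=0$ and $\theta'(\pi,\lambda)=0$. Since $q$ is even, Theorem 1 gives $\varphi'(\pi,\lambda)=\theta(\pi,\lambda)$, so the Hill discriminant is $F(\lambda)=\varphi'(\pi,\lambda)+\theta(\pi,\lambda)=2\theta(\pi,\lambda)$. Now I would evaluate the Wronskian identity (16) at this $\lambda$: the two subtracted terms both vanish because $\varphi(\pi,\lambda)=0$, leaving $\theta(\pi,\lambda)\varphi'(\pi,\lambda)=1$, i.e. $\theta(\pi,\lambda)^2=1$, hence $\theta(\pi,\lambda)=\pm1$ and $F(\lambda)=\pm2$. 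Thus $\lambda$ is a periodic or antiperiodic eigenvalue, so $\lambda$ is an eigenvalue of $L(q)$. To see the geometric multiplicity is $2$, I would observe that $\varphi(\cdot,\lambda)$ is a periodic (or antiperiodic) eigenfunction because $\varphi(\pi,\lambda)=0=\varphi(0,\lambda)$ and $\varphi'(\pi,\lambda)=\theta(\pi,\lambda)=\pm1=\pm\varphi'(0,\lambda)$; and $\theta(\cdot,\lambda)$ is a second, linearly independent one because $\theta'(\pi,\lambda)=0=\theta'(0,\lambda)$ and $\theta(\pi,\lambda)=\pm1=\pm\theta(0,\lambda)$. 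Since $\varphi$ and $\theta$ are linearly independent (their Wronskian is $1$), the eigenspace is two-dimensional, so every solution of (1) is periodic (resp. antiperiodic).

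For the converse, suppose $\lambda$ is an eigenvalue of $L(q)$ with geometric multiplicity $2$. Then every solution of (1) is periodic or antiperiodic; in particular both $\varphi(\cdot,\lambda)$ and $\theta(\cdot,\lambda)$ satisfy the same boundary condition (2) or (3). From $\varphi(0,\lambda)=0$ and periodicity/antiperiodicity we get $\varphi(\pi,\lambda)=\pm\varphi(0,\lambda)=0$, so $\lambda$ is a Dirichlet eigenvalue; similarly $\theta'(0,\lambda)=0$ forces $\theta'(\pi,\lambda)=\pm\theta'(0,\lambda)=0$, so $\lambda$ is a Neumann eigenvalue. This completes the equivalence.

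The main obstacle is the bookkeeping in the forward direction: one must be careful that "geometric multiplicity $2$" for $L(q)$ is correctly interpreted — an eigenvalue of $L(q)$ is by definition a periodic or antiperiodic eigenvalue, and its geometric multiplicity as an eigenvalue of $L(q)$ equals the dimension of the corresponding solution space of (1). One must also verify that $\varphi(\cdot,\lambda)$ and $\theta(\cdot,\lambda)$ indeed both satisfy the \emph{same} one of (2), (3) — this is automatic once one knows $\theta(\pi,\lambda)=\varphi'(\pi,\lambda)$ equals the \emph{same} sign $\pm1$, which it does since it is literally the same number. Beyond this, the argument is a short manipulation of (12) and (16), with no analytic estimates required.
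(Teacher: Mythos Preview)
Your proposal is correct and follows essentially the same route as the paper: both directions rely on the Wronskian identity together with the relation $\varphi'(\pi,\lambda)=\theta(\pi,\lambda)$ from Theorem~1 and the initial conditions~(7), deducing $\theta(\pi,\lambda)=\varphi'(\pi,\lambda)=\pm1$ and then checking the boundary conditions directly. Your write-up is in fact slightly more explicit than the paper's in spelling out linear independence and the ``same sign'' issue, but the argument is the same.
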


\begin{proof}
Suppose $\lambda$ is both a Neumann and Dirichled eigenvalue, that is, both
equality in (8) hold. On the other hand, it follows from (12), (8) and (15)
that
\begin{equation}
\theta(\pi,\lambda)=\varphi^{^{\prime}}(\pi,\lambda)=\pm1.
\end{equation}
Now using (8), (16) and (7) one can easily verify that both $\theta
(x,\lambda)$ and $\varphi(x,\lambda)$ satisfy either periodic or anti-periodic
boundary condition, that is, $\lambda$ is an eigenvalue of the operator $L(q)$
with geometric multiplicity $2.$

Conversely, if $\lambda$ is an eigenvalue of $L(q)$ with geometric
multiplicity $2,$ then both $\theta(x,\lambda)$ and $\varphi(x,\lambda)$
satisfy either periodic or anti-periodic boundary condition. Therefore by (7)
the equalities in (8) hold, that is, $\lambda$ is both Neumann and Dirichled eigenvalue.
\end{proof}

\begin{theorem}
Let $q$ be an even complex-valued function. A complex number $\lambda$\ is an
eigenvalue of multiplicity $s$ of the operator $L(q)$ if and only if it is an
eigenvalue of multiplicities $u$ and $v$ of the operators $D(q)$ and $N(q)$
respectively, where $u+v=s$ and $u=0$ ($v=0$) means that $\lambda$ is not an
eigenvalue of $D(q)$ ($N(q)).$
\end{theorem}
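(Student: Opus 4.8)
The plan is to reduce the assertion to a single identity of orders of zeros of entire functions and then read it off from the evenness of $q$. One first recalls the classical fact (cf. [6,8] and the general theory of the resolvent of an ordinary differential operator) that for each of the three problems the algebraic multiplicity of $\lambda$ as an eigenvalue is exactly the order of $\lambda$ as a zero of the corresponding characteristic function: for $L(q)$ this function is $(F(\lambda)-2)(F(\lambda)+2)$ by (10), for $D(q)$ it is $\varphi(\pi,\lambda)$, and for $N(q)$ it is $\theta'(\pi,\lambda)$ by (8). All three functions are entire in $\lambda$, and ``order $0$'' means the value at $\lambda$ is nonzero, i.e. that $\lambda$ is not an eigenvalue of that operator --- which is exactly the meaning of $u=0$ ($v=0$) in the statement. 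Hence it suffices to prove that, for every $\lambda\in\mathbb{C}$,
\[
\operatorname{ord}_{\lambda}\big((F-2)(F+2)\big)=\operatorname{ord}_{\lambda}\varphi(\pi,\cdot)+\operatorname{ord}_{\lambda}\theta'(\pi,\cdot).
\]

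The identity would be obtained precisely from (12) and the Wronskian equality (15), as announced in the paper. Since $q$ is even, Theorem 1 gives $\varphi'(\pi,\lambda)=\theta(\pi,\lambda)$, so $F(\lambda)=\varphi'(\pi,\lambda)+\theta(\pi,\lambda)=2\theta(\pi,\lambda)$ and therefore
\[
(F(\lambda)-2)(F(\lambda)+2)=F(\lambda)^{2}-4=4\big(\theta(\pi,\lambda)^{2}-1\big).
\]
Substituting $\varphi'(\pi,\lambda)=\theta(\pi,\lambda)$ into (15) gives $\theta(\pi,\lambda)^{2}-\varphi(\pi,\lambda)\theta'(\pi,\lambda)=1$, i.e. $\theta(\pi,\lambda)^{2}-1=\varphi(\pi,\lambda)\theta'(\pi,\lambda)$. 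Combining the last two relations yields the key identity of entire functions
\[
(F(\lambda)-2)(F(\lambda)+2)=4\,\varphi(\pi,\lambda)\,\theta'(\pi,\lambda).
\]

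Because $4$ is a nonzero constant and the order of a zero of a product of entire functions is the sum of the orders of the factors (this still holds when a factor has no zero there, giving order $0$), the identity above gives $s=u+v$ at every $\lambda$, where $s$, $u$, $v$ are the respective algebraic multiplicities with the stated convention; both directions of the ``if and only if'' and the interpretation of $u=0$ and $v=0$ follow at once. As byproducts one recovers Proposition 1 of [9] by comparing when $s\ge 1$ with when $u+v\ge 1$, and Lemma 1 of [9] since $u,v\ge 1$ forces $s\ge 2$, hence $\lambda$ is a zero of order $\ge 2$ of exactly one of the functions $F-2$ and $F+2$, so $F(\lambda)=\pm 2$ and $dF/d\lambda=0$. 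The only step needing genuine care is the very first one --- that algebraic multiplicity equals the order of the zero of the characteristic function, in particular for the periodic operator $L(q)$, whose geometric multiplicity may be $2$; this is classical but should be quoted precisely, and it is the sole ingredient not contained in the short algebraic computation above.
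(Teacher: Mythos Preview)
Your proof is correct and follows essentially the same approach as the paper: both arguments identify algebraic multiplicity with the order of vanishing of the characteristic function, derive the key factorization $(F(\lambda)-2)(F(\lambda)+2)=4\,\varphi(\pi,\lambda)\,\theta'(\pi,\lambda)$ from (12) and (15), and read off $s=u+v$. Your version is slightly more explicit about the step identifying multiplicity with order of zero (which the paper simply asserts as ``well-known and clear''), but the logical structure is identical.
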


\begin{proof}
It is well-known and clear that $\lambda_{0}$\ is an eigenvalue of
multiplicities $u$, $v$ and $s$ of the operator $D(q)$, $N(q)$ and $L(q)$
respectively if and only if
\begin{equation}
\varphi(\pi,\lambda)=(\lambda_{0}-\lambda)^{u}f(\lambda)\text{, \ }%
\theta^{^{\prime}}(\pi,\lambda)=(\lambda_{0}-\lambda)^{v}g(\lambda)
\end{equation}
and
\begin{equation}
(F(\lambda)-2)(F(\lambda)+2)=(\lambda_{0}-\lambda)^{s}h(\lambda)\text{, \ }%
\end{equation}
where $f(\lambda_{0})\neq0,$ $g(\lambda_{0})\neq0$ and $h(\lambda_{0})\neq0.$
On the other hand by (12) and (15) we have
\begin{equation}
(F(\lambda)-2)(F(\lambda)+2)=4\theta^{2}(\pi,\lambda)-4=4(\theta(\pi
,\lambda)\varphi^{^{\prime}}(\pi,\lambda)-1)=4\varphi(\pi,\lambda
)\theta^{^{\prime}}(\pi,\lambda).
\end{equation}
Thus the proof of the theorem follows from (17)-(19)
\end{proof}

To analyze the periodic and antiperiodic eigenvalues in detail let us
introduce the following notations and definitions.

\begin{definition}
Let $\sigma(T)$ denotes the spectrum of the operator $T.$ A number $\lambda$
is called $PDN(q)$ (periodic, Dirichled and Neumann) eigenvalue if $\lambda
\in\sigma(P(q))\cap\sigma(D(q))\cap\sigma(N(q)).$ A number $\lambda\in
\sigma(P(q))\cap\sigma(D(q))$ is called $PD(q)$ (periodic and Dirichled)
eigenvalue if it is not $PDN(q)$ eigenvalue. A number $\lambda\in
\sigma(P(q))\cap\sigma(N(q))$ is called $PN(q)$ (periodic and Neumann)
eigenvalue if it is not $PDN(q)$ eigenvalue. Everywhere replacing $P(q)$ by
$A(q)$ we get the definition of $ADN(q),$ $AD(q)$ and $AN(q)$ eigenvalues.
\end{definition}

Using Theorems 6, 7, Definition 1 and the equality $\sigma(P(q))\cap
\sigma(A(q))=\emptyset$ we obtain

\begin{theorem}
Let $q$ be an even complex-valued function. Then

$(a)$ The spectrum of $P(q)$ is the union of the following three pairwise
disjoint sets: $\{PDN(q)$ eigenvalues$\},\{PD(q)$ eigenvalues$\}$ and
$\{PN(q)$ eigenvalues$\}.$

$(b)$ A complex number $\lambda$\ is an eigenvalue of geometric multiplicity
$2$ of the operator $P(q)$ if and only if it is $PDN(q)$ eigenvalue.

$(c)$ A complex number $\lambda$\ is an eigenvalue of geometric multiplicity
$1$ of the operator $P(q)$ if and only if it is either $PD(q)$ or $PN(q)$ eigenvalue.

The theorem continues to hold if $P(q),$ $PDN(q),PD(q)$ and $PN(q)$ are
replaced by $A(q),$ $ADN(q),$ $AD(q)$ and $AN(q)$ respectively.
\end{theorem}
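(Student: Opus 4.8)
The plan is to derive everything from the decomposition $\sigma(L(q))=\sigma(P(q))\cup\sigma(A(q))$ (with the two pieces disjoint) together with Theorems 6 and 7 and Definition 1; essentially no new analysis is needed beyond one short linear-algebra observation. First I would record two preliminary facts. Since (1) is of second order, at every $\lambda$ the solution space is two-dimensional, so each eigenvalue of $P(q)$, $A(q)$ or $L(q)$ has geometric multiplicity $1$ or $2$. Next, if $\lambda\in\sigma(P(q))$ then $F(\lambda)=2$, so the translation-by-$\pi$ operator $T$ on the solution space has $1$ as its only eigenvalue and therefore $(T-I)^{2}=0$; then $T^{2}-I=2(T-I)$ has the same kernel as $T-I$, i.e. the $2\pi$-periodic solutions (the eigenfunctions of $L(q)$ at $\lambda$) coincide with the $\pi$-periodic ones (the eigenfunctions of $P(q)$ at $\lambda$). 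Hence for $\lambda\in\sigma(P(q))$ the geometric multiplicity in $P(q)$ equals that in $L(q)$, and the same holds for $A(q)$ using $F(\lambda)=-2$ and $T+I$ in place of $T-I$. (Disjointness $\sigma(P(q))\cap\sigma(A(q))=\emptyset$ enters here only through the fact that $F$ takes the value $2$, and not $-2$, at a periodic eigenvalue, and conversely.)

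For part $(a)$ I would take $\lambda\in\sigma(P(q))\subseteq\sigma(L(q))$; by Theorem 7 its multiplicity $s\geq1$ in $L(q)$ equals $u+v$, where $u,v$ are its multiplicities in $D(q),N(q)$, so $u\geq1$ or $v\geq1$, i.e. $\lambda\in\sigma(D(q))\cup\sigma(N(q))$. Thus $\lambda$ lies in exactly one of $\sigma(P)\cap\sigma(D)\cap\sigma(N)$, $(\sigma(P)\cap\sigma(D))\setminus\sigma(N)$, $(\sigma(P)\cap\sigma(N))\setminus\sigma(D)$, that is, it is a $PDN(q)$, $PD(q)$ or $PN(q)$ eigenvalue; conversely all three sets lie in $\sigma(P(q))$ and are pairwise disjoint straight from Definition 1 (the case excluded from $PD$ and from $PN$ is exactly $PDN$). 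For $(b)$: if $\lambda$ has geometric multiplicity $2$ in $P(q)$ then, by the preliminary step, also in $L(q)$, so Theorem 6 gives $\lambda\in\sigma(D(q))\cap\sigma(N(q))$, whence with $\lambda\in\sigma(P(q))$ it is a $PDN(q)$ eigenvalue; conversely a $PDN(q)$ eigenvalue is a Dirichlet and a Neumann eigenvalue, so by Theorem 6 it has geometric multiplicity $2$ in $L(q)$, hence in $P(q)$. For $(c)$: since the geometric multiplicity is $1$ or $2$, an eigenvalue $\lambda$ of $P(q)$ has geometric multiplicity $1$ iff it is not of multiplicity $2$, iff (by $(b)$) it is not $PDN(q)$, iff (by $(a)$) it is $PD(q)$ or $PN(q)$.

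The antiperiodic case needs no new idea: Theorems 6 and 7, the splitting $\sigma(L(q))=\sigma(P(q))\cup\sigma(A(q))$, and the reduction of multiplicities from $L(q)$ to $A(q)$ are all symmetric under interchanging $P$ and $A$, so the same three arguments carry over with $A(q),ADN(q),AD(q),AN(q)$ in place of $P(q),PDN(q),PD(q),PN(q)$. The only step that is not pure bookkeeping is the reduction of the geometric multiplicity from $L(q)$ to $P(q)$ (or $A(q)$) in the first paragraph, and that is where I would be most careful; everything after it is a formal manipulation of the sets in Definition 1.
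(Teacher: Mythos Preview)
Your proof is correct and follows exactly the route the paper indicates: the paper gives no written proof of this theorem but simply states that it is obtained from Theorems~6 and~7, Definition~1, and the disjointness $\sigma(P(q))\cap\sigma(A(q))=\emptyset$. Your argument fills in precisely these ingredients, the only nontrivial point being the reduction of the geometric multiplicity from $L(q)$ to $P(q)$ via the monodromy identity $T^{2}-I=2(T-I)$ when $F(\lambda)=2$, which the paper leaves implicit (it is essentially contained in the proof of Theorem~6, where $\theta(\pi,\lambda)=\varphi'(\pi,\lambda)=\pm1$ already forces both fundamental solutions to be $\pi$-periodic or $\pi$-antiperiodic).
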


Now we prove the main theorem of this section.

\begin{theorem}
Let $q$ be an even complex-valued function and $\lambda$ be an eigenvalue of
geometric multiplicity $1$ of the operator $P(q).$ Then the number $\lambda
$\ is an eigenvalue of multiplicity $s$ of $P(q)$ if and only if it is an
eigenvalue of multiplicity $s$ either of the operator $D(q)$ (first case) or
of the operator $N(q)$ (second case). In the first case the system of the root
functions of the operators $P(q)$ and $D(q)$ consists of the same
eigenfunction $\varphi(x,\lambda)$ and associated functions
\begin{equation}
\frac{\partial\varphi(x,\lambda)}{\partial\lambda},\frac{1}{2!}\frac
{\partial^{2}\varphi(x,\lambda)}{\partial\lambda^{2}},...,\frac{1}%
{(s-1)!}\frac{\partial^{s-1}\varphi(x,\lambda)}{\partial\lambda^{s-1}}.
\end{equation}

In the second case the system of the root function of the operators $P(q)$ and
$N(q)$ consists of the same eigenfunction $\theta(x,\lambda)$ and associated
functions
\begin{equation}
\frac{\partial\theta(x,\lambda)}{\partial\lambda},\frac{1}{2!}\frac
{\partial^{2}\theta(x,\lambda)}{\partial\lambda^{2}},...,\frac{1}{(s-1)!}%
\frac{\partial^{s-1}\theta(x,\lambda)}{\partial\lambda^{s-1}}.
\end{equation}

The theorem continues to hold if $P(q)$ is replaced by $A(q).$
\end{theorem}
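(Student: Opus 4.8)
The plan is to reduce everything to a statement about the order of vanishing of a single analytic function of $\lambda$, using the structure already established in Theorems 6--9. Since $\lambda$ has geometric multiplicity $1$ for $P(q)$, Theorem 9(c) tells us it is either a $PD(q)$ or a $PN(q)$ eigenvalue but not $PDN(q)$; equivalently, by Theorem 6 it lies in exactly one of $\sigma(D(q))$, $\sigma(N(q))$. Say it is $PD(q)$ (the $PN$ case being identical with $\varphi\leftrightarrow\theta$, $D\leftrightarrow N$). Then $\lambda$ is not a Neumann eigenvalue, so $\theta'(\pi,\lambda)\neq0$, and formula (19), namely $(F(\lambda)-2)(F(\lambda)+2)=4\varphi(\pi,\lambda)\theta'(\pi,\lambda)$, shows that the order of vanishing of $(F-2)(F+2)$ at $\lambda$ equals the order of vanishing of $\varphi(\pi,\cdot)$ there, i.e. $s_{L}=s_{D}$ where $s_L$ is the multiplicity as an $L(q)$-eigenvalue and $s_D$ the Dirichlet multiplicity. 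Because $\sigma(P(q))\cap\sigma(A(q))=\emptyset$, near $\lambda$ exactly one of the factors $F-2$, $F+2$ vanishes, so the multiplicity $s$ of $\lambda$ as an eigenvalue of $P(q)$ equals $s_L$, hence $s=s_D$. This gives the first sentence of the theorem.

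Next I would identify the root functions. The eigenfunction: by Theorem 2 (valid for complex even $q$, as noted) the unique eigenfunction of $P(q)$ at $\lambda$ is $\varphi(x,\lambda)$ or $\theta(x,\lambda)$; since $\lambda\in\sigma(D(q))$ we have $\varphi(\pi,\lambda)=0$, so $\varphi(x,\lambda)$ is the Dirichlet eigenfunction, and since $\theta(\pi,\lambda)\neq\pm1$ would follow from Theorem 6 (as $\lambda$ is not Neumann, so not both $\pm1$)—more directly, if $\theta(x,\lambda)$ were periodic/antiperiodic then $\theta(\pi,\lambda)=\pm1$ and by (12) $\varphi'(\pi,\lambda)=\pm1$, and with the Wronskian (15) and $\varphi(\pi,\lambda)=0$ we'd get $\theta'(\pi,\lambda)=0$, contradicting non-Neumann. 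Hence the common eigenfunction is $\varphi(x,\lambda)$. For the associated functions, the standard fact is: $\lambda$ is an eigenvalue of multiplicity $s$ for a boundary-value problem whose characteristic function vanishes to order $s$ precisely when the chain $\varphi(x,\lambda),\ \tfrac{\partial\varphi}{\partial\lambda}(x,\lambda),\ \tfrac{1}{2!}\tfrac{\partial^2\varphi}{\partial\lambda^2},\dots,\tfrac{1}{(s-1)!}\tfrac{\partial^{s-1}\varphi}{\partial\lambda^{s-1}}$ satisfies the boundary conditions up to the appropriate order—this is exactly why these $\lambda$-derivatives of $\varphi$ form a Jordan chain. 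The point is that $(-\tfrac{d^2}{dx^2}+q-\lambda)\tfrac{1}{k!}\partial_\lambda^{k}\varphi = \tfrac{1}{(k-1)!}\partial_\lambda^{k-1}\varphi$, so modulo boundary conditions these are honest associated functions; and $\tfrac{1}{k!}\partial_\lambda^{k}\varphi(\pi,\lambda)=0$ for $k=0,\dots,s-1$ because $\varphi(\pi,\cdot)$ vanishes to order $s_D=s$ at $\lambda$. I would verify that these same functions also satisfy the periodic (resp. antiperiodic) conditions to order $s$: differentiating $F(\mu)\mp 2 = (\lambda-\mu)^s h(\mu)$ and using (12), $\varphi'(\pi,\mu)=\theta(\pi,\mu)$, together with $\varphi(\pi,\mu)=(\lambda-\mu)^{s}f(\mu)$, $\theta'(\pi,\mu)=(\lambda-\mu)^{0}g(\mu)$, one checks that the first $s$ $\lambda$-derivatives of $\varphi$ satisfy $y(\pi)=y(0)$, $y'(\pi)=y'(0)$ up to the needed order—so the chain is simultaneously a Jordan chain for $D(q)$ and for $P(q)$.

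The main obstacle is the last verification: showing the $\lambda$-derivatives of $\varphi$ satisfy the periodic/antiperiodic boundary conditions to the correct order, not just the Dirichlet condition. For the first component $y(\pi)=y(0)$ this is automatic since $\varphi(\pi,\mu)$ vanishes to order $s$ and $\varphi(0,\mu)=0$ identically by (7). The derivative condition $y'(\pi)=y'(0)$ is the delicate one: $\varphi'(0,\mu)=1$ identically, so we need $\partial_\lambda^k\varphi'(\pi,\lambda)=\partial_\lambda^k(1)=0$ for $k=1,\dots,s-1$ and $\varphi'(\pi,\lambda)=1$; this is where $\varphi'(\pi,\mu)=\theta(\pi,\mu)=\pm 1 + O((\lambda-\mu)^?)$ must be shown to hold to order $s-1$. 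I would get this from $4\theta^2(\pi,\mu)-4=(F-2)(F+2)$ vanishing to order $s$ at $\lambda$ with $\theta(\pi,\lambda)=\pm1$: then $\theta(\pi,\mu)\mp1$ vanishes to order $s$ as well (since $\theta(\pi,\mu)\pm1$ is nonzero at $\lambda$), giving $\partial_\lambda^k\varphi'(\pi,\lambda)=0$ for $1\le k\le s-1$. Feeding this back shows the proposed chain satisfies the periodic/antiperiodic conditions to order $s$, and a dimension count (the space of such chains has dimension $s$ on each side and they coincide) finishes the identification of the root-function systems. The $A(q)$ case and the $PN$/$N(q)$ case are obtained by the obvious substitutions.
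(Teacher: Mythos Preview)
Your argument is correct and follows essentially the same route as the paper: reduce to the $PD$/$PN$ dichotomy, use the factorization $(F-2)(F+2)=4\varphi(\pi,\lambda)\theta'(\pi,\lambda)$ together with $\theta'(\pi,\lambda)\neq0$ to match the orders of vanishing, and then verify that the $\lambda$-derivatives of $\varphi$ satisfy both the Dirichlet and periodic boundary conditions via $F=2\theta(\pi,\cdot)=2\varphi'(\pi,\cdot)$. Two small citation slips to fix: what you call ``Theorem~9(c)'' is Theorem~8(c) (Theorem~9 is the statement you are proving), and the fact that the unique eigenfunction is $\varphi$ or $\theta$ for general even $q$ comes from Theorem~1, not Theorem~2 (the latter is specific to the Mathieu potential); your ``more direct'' argument already bypasses this, so the proof stands.
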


\begin{proof}
Let $\lambda$\ be an eigenvalue of geometric multiplicity $1$ and multiplicity
$s$ of the operator $P(q).$ By Theorem 1 there are two cases.

Case 1. The corresponding eigenfunction is $\varphi(x,\lambda).$

Case 2. The corresponding eigenfunction is $\theta(x,\lambda).$

We consider Case 1. In the same way one can consider Case 2. In Case 1,
$\theta(x,\lambda)$ is not a periodic solution, that is, it does not satisfy
the periodic boundary condition (2). On the other hand, the first equality of
(6) with (12) and (7) implies that
\begin{equation}
\theta(\pi,\lambda)=1=\theta(0,\lambda),
\end{equation}
that is, $\theta(x,\lambda)$ satisfies the first equality in (2). Therefore
$\theta(x,\lambda)$ does not satisfies the second equality of (2), that is,%
\begin{equation}
\theta^{^{\prime}}(\pi,\lambda)\neq0.
\end{equation}
This inequality means that $v=0,$ where $v$ is defined in Theorem 7.
Therefore, by Theorem 7 we have $u=s,$ that is, $\lambda$\ is an eigenvalue of
multiplicity $s$ of the operator $D(q).$

Now suppose that $\lambda$\ is an eigenvalue of multiplicity $s$ of $D(q).$
Then by (8) and (7)
\begin{equation}
\varphi(\pi,\lambda)=0=\varphi(0,\lambda).
\end{equation}
On the other hand, using the first equality of (6), (12) and (7) we get
\begin{equation}
\varphi^{^{\prime}}(\pi,\lambda)=1=\varphi^{^{\prime}}(0,\lambda).
\end{equation}
Therefore $\varphi(x,\lambda)$\ is an eigenfunction of $P(q)$ corresponding to
the eigenvalue $\lambda.$ Then, by Theorem 1, $\theta(x,\lambda)$ is not a
periodic solution. This, as we noted above, implies (23) and the equality
$u=s.$ Thus, by Theorem 7, $\lambda$\ is an eigenvalue of multiplicity $s$ of
$P(q).$

If $\lambda$ is an eigenvalue of multiplicity $s$ of the operators $P(q)$ and
$D(q),$ then
\begin{equation}
F(\lambda)=2,\text{ }\frac{dF}{d\lambda}=0,\text{ }\frac{d^{2}F}{d\lambda^{2}%
}=0,...,\frac{d^{s-1}F}{d\lambda^{s-1}}=0
\end{equation}
and
\begin{equation}
\varphi(\pi,\lambda)=0,\frac{d\varphi(\pi,\lambda)}{d\lambda}=0,\frac
{d^{2}\varphi(\pi,\lambda)}{d\lambda^{2}}=0,...,\frac{d^{s-1}\varphi
(\pi,\lambda)}{d\lambda^{s-1}}=0.
\end{equation}
Since $\varphi(0,\lambda)=0$ and $\varphi^{^{\prime}}(0,\lambda)=1$ for all
$\lambda,$ we have
\begin{equation}
\varphi(0,\lambda)=0,\frac{d\varphi(0,\lambda)}{d\lambda}=0,\frac{d^{2}%
\varphi(0,\lambda)}{d\lambda^{2}}=0,...,\frac{d^{s-1}\varphi(0,\lambda
)}{d\lambda^{s-1}}=0
\end{equation}
and
\begin{equation}
\varphi^{^{\prime}}(0,\lambda)=1,\frac{d\varphi^{^{\prime}}(0,\lambda
)}{d\lambda}=0,\frac{d^{2}\varphi^{^{\prime}}(0,\lambda)}{d\lambda^{2}%
}=0,...,\frac{d^{s-1}\varphi^{^{\prime}}(0,\lambda)}{d\lambda^{s-1}}=0.
\end{equation}
Moreover, using (26) and (12) we obtain
\begin{equation}
\varphi^{^{\prime}}(\pi,\lambda)=1,\frac{d\varphi^{^{\prime}}(\pi,\lambda
)}{d\lambda}=0,\frac{d^{2}\varphi^{^{\prime}}(\pi,\lambda)}{d\lambda^{2}%
}=0,...,\frac{d^{s-1}\varphi^{^{\prime}}(\pi,\lambda)}{d\lambda^{s-1}}=0.
\end{equation}
Thus, by (27)-(30), $\varphi(x,\lambda)$ and the functions in (20) satisfy
both the periodic and Dirichlet boundary conditions. On the other hand,
differentiating $s-1$ times, with respect to $\lambda,$ the equation
\begin{equation}
-\varphi^{^{\prime\prime}}(x,\lambda)+q(x)\varphi(x,\lambda)=\lambda
\varphi(x,\lambda)
\end{equation}
we obtain
\[
-(\frac{1}{k!}\frac{\partial^{k}\varphi(x,\lambda)}{\partial\lambda^{k}%
})^{^{\prime\prime}}+(q(x)-\lambda)\frac{1}{k!}\frac{\partial^{k}%
\varphi(x,\lambda)}{\partial\lambda^{k}}=\frac{1}{(k-1)!}\frac{\partial
^{k-1}\varphi(x,\lambda)}{\partial\lambda^{k-1}}%
\]
\ for $k=1,2,...,(s-1).$ Therefore $\varphi(x,\lambda)$ and the functions in
(20) are the root functions of the operators $P(q)$ and $D(q).$ Thus the first
case is proved. In the same way we prove the second case. The proof of this
results for $A(q)$ are similar
\end{proof}

\section{Main Results}

In this section we consider the operators $P(a)$, $A(a),$ $D(a)$ and $N(a)$
with potential%
\begin{equation}
q(x)=2a\cos2x,
\end{equation}
where $a$ is a nonzero complex number. By Theorem 2 the geometric multiplicity
of the eigenvalues of $P(a)$ and $A(a)$ is $1.$ Therefore it follows from
Theorem 8 that%
\begin{equation}
\sigma(P(a))=\{PD(a)\text{ eigenvalues}\}\cup\{PN(a)\text{ eigenvalues}\},
\end{equation}%
\begin{equation}
\sigma(A(a))=\{AD(a)\text{ eigenvalues}\}\cup\{AN(a)\text{ eigenvalues}\},
\end{equation}
where $PD(q),$ $PN(q),$ $AD(q)$ and $AN(q)$ (see Definition 1) are denoted by
$PD(a),$ $PD(a),$ $PD(a)$ and $PD(a)$ when the potential $q$ is defined by
(32). Moreover, Theorem 7, Theorem 2 and Theorem 9 yield the equalities
\begin{equation}
\sigma(D(a))=\{PD(a)\text{ eigenvalues}\}\cup\{AD(a)\text{ eigenvalues}\},
\end{equation}%
\begin{equation}
\sigma(N(a))=\{PN(a)\text{ eigenvalues}\}\cup\{AN(a)\text{ eigenvalues}\}
\end{equation}
and the following theorem.

\begin{theorem}
For any $a\neq0$ the eigenvalue $\lambda$ of the operator $P(a)$ or $A(a)$ is
multiple if and only if it is a multiple \ eigenvalue either of $D(a)$ or
$N(a)$. Moreover, the operators $P(a)$, $A(a),$ $D(a)$ and $N(a)$ have
associated functions corresponding to any multiple eigenvalues.
\end{theorem}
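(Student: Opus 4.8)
The plan is to combine the spectral decompositions (34)--(37) with Theorem 9, after first pinning down the geometric multiplicities of all four operators. Since $q(x)=2a\cos 2x$ is an even complex-valued function and $a\neq0$, Theorem 2 guarantees that every eigenvalue of $P(a)$ and of $A(a)$ has geometric multiplicity $1$; moreover a one-line Sturm--Liouville argument gives the same for $D(a)$ and $N(a)$, since any $D(a)$-eigenfunction is a multiple of $\varphi(\cdot,\lambda)$ (being a solution vanishing at $0$) and any $N(a)$-eigenfunction a multiple of $\theta(\cdot,\lambda)$. Consequently, for each of the four operators an eigenvalue is multiple precisely when its algebraic multiplicity is $\geq 2$ while its geometric multiplicity equals $1$, so a multiple eigenvalue of any of them automatically carries associated functions; for $P(a)$ and $D(a)$ (resp. $A(a)$ and $N(a)$) these are exactly the functions displayed in (20) (resp. (21)) by Theorem 9. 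This takes care of the ``moreover'' part once the equivalence is established.

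For the forward implication, suppose $\lambda$ is a multiple eigenvalue of $P(a)$, of multiplicity $s\geq 2$. Its geometric multiplicity being $1$, Theorem 9 applies and shows that $\lambda$ is an eigenvalue of multiplicity $s$ of $D(a)$ or of $N(a)$, hence in particular a multiple eigenvalue of one of them. The identical argument works with $A(a)$ in place of $P(a)$.

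For the reverse implication, suppose $\lambda$ is a multiple eigenvalue of $D(a)$, of multiplicity $s\geq 2$. By (36), $\lambda$ is a $PD(a)$- or an $AD(a)$-eigenvalue, so $\lambda\in\sigma(P(a))$ or $\lambda\in\sigma(A(a))$; say $\lambda\in\sigma(P(a))$. Its geometric multiplicity there is $1$, so Theorem 9 gives that $\lambda$ has in $P(a)$ some multiplicity $t$ equal either to its multiplicity in $D(a)$ (the first case, with eigenfunction $\varphi(\cdot,\lambda)$) or to its multiplicity in $N(a)$ (the second case, with eigenfunction $\theta(\cdot,\lambda)$). The second case cannot occur here: in it the argument in the proof of Theorem 9 forces $\varphi(\pi,\lambda)\neq0$, i.e. $\lambda\notin\sigma(D(a))$, contradicting the hypothesis. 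Hence $t=s\geq2$ and $\lambda$ is a multiple eigenvalue of $P(a)$. The case $\lambda\in\sigma(A(a))$ is identical, and the case of a multiple eigenvalue of $N(a)$ is symmetric, using (37) in place of (36). Together with the forward implication this yields the equivalence.

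I expect the only delicate point to be the bookkeeping in the reverse direction: Theorem 9 is stated as a dichotomy, and one must exclude the wrong branch, i.e. verify that a multiple $D(a)$-eigenvalue cannot be matched, through the spectrum of $P(a)$ or $A(a)$, to a multiple $N(a)$-eigenvalue. Equivalently, one uses that $P(a)$ and $A(a)$ have no eigenvalue of geometric multiplicity $2$ (Theorem 2 together with Theorem 8(b)), so there is no $PDN(a)$ or $ADN(a)$ eigenvalue and the two alternatives in Theorem 9 are genuinely mutually exclusive. Everything else is a direct substitution into (34)--(37) and Theorem 9.
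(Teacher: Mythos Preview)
Your argument is correct and follows essentially the same route as the paper, which simply states that the result is a consequence of Theorems 7, 2 and 9 together with the spectral decompositions (33)--(36). Note that your equation labels (34)--(37), (36), (37) are off by one relative to the paper; they should read (33)--(36), (35), (36). Your additional remark that $D(a)$ and $N(a)$ always have geometric multiplicity $1$ (separated boundary conditions) is exactly what is needed to justify the ``moreover'' clause for those two operators, and your exclusion of the ``wrong branch'' in Theorem 9 via Theorem 8(b) is the intended bookkeeping.
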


Clearly, the eigenfunctions corresponding to $PN(a)$ eigenvalues, $PD(a)$
eigenvalues, $AD(a)$ eigenvalues and $AN(a)$ eigenvalues have the forms
\begin{equation}
\Psi_{PN}(x)=\frac{a_{0}}{\sqrt{2}}+\sum_{k=1}^{\infty}a_{k}\cos2kx,
\end{equation}%
\begin{equation}
\Psi_{PD}(x)=\sum_{k=1}^{\infty}b_{k}\sin2kx,
\end{equation}%
\begin{equation}
\Psi_{AD}(x)=\sum_{k=1}^{\infty}c_{k}\sin(2k-1)x,
\end{equation}
and%
\begin{equation}
\Psi_{AN}(x)=\sum_{k=1}^{\infty}d_{k}\cos(2k-1)x
\end{equation}
respectively. For simplicity of the calculating we normalize these
eigenfunctions as follows
\begin{equation}
\sum_{k=0}^{\infty}\left\vert a_{k}\right\vert ^{2}=1,\text{ }\sum
_{k=1}^{\infty}\left\vert b_{k}\right\vert ^{2}=1,\text{ }\sum_{k=1}^{\infty
}\left\vert c_{k}\right\vert ^{2}=1,\text{ }\sum_{k=1}^{\infty}\left\vert
d_{k}\right\vert ^{2}=1.
\end{equation}
Substituting the functions (37)-(40) into (13) we obtain the following
equalities
\begin{equation}
\lambda a_{0}=\sqrt{2}aa_{1},\text{ }(\lambda-4)a_{1}=a\sqrt{2}a_{0}%
+aa_{2},\text{ }(\lambda-(2k)^{2})a_{k},=aa_{k-1}+aa_{k+1},
\end{equation}%
\begin{equation}
(\lambda-4)b_{1}=ab_{2},\text{ }(\lambda-(2k)^{2})b_{k},=ab_{k-1}+ab_{k+1},
\end{equation}%
\begin{equation}
(\lambda-1)c_{1}=ac_{1}+ac_{2},\text{ }(\lambda-(2k-1)^{2})c_{k}%
,=ac_{k-1}+ac_{k+1},
\end{equation}%
\begin{equation}
(\lambda-1)d_{1}=-ad_{1}+ad_{2},\text{ }(\lambda-(2k-1)^{2})d_{k}%
,=ad_{k-1}+ad_{k+1}%
\end{equation}
for $k=2,3,...$. Here $a_{k},$ $b_{k},$ $c_{k},$ $d_{k}$ depend on $\lambda$
and $a_{0},b_{1},c_{1}$, $d_{1}$ are nonzero constants (see [6] p. 34-35).

By Theorem 10, if the eigenvalue $\lambda$ corresponding to one of the
eigenfunctions (37)-(40), denoted by $\Psi(x)$, is multiple then there exists
associated function $\Phi$ satisfying
\begin{equation}
-(\Phi(x,\lambda))^{^{\prime\prime}}+(q(x)-\lambda)\Phi(x,\lambda)=\Psi(x).
\end{equation}
Since the boundary conditions (2)-(5) are self-adjoint $\overline{\lambda}$
and $\overline{\Psi(x)}$ are eigenvalue and eigenfunction of the adjoint
operator. Therefore multiplying both sides of (46) by $\overline{\Psi}$ we get
$(\Psi,\overline{\Psi})=0,$ where $(.,.)$ is the inner product in $L_{2}%
[0,\pi].$ Thus, if the eigenvalues corresponding to the eigenfunctions
(37)-(40), are multiple, then we have
\begin{equation}
\sum_{k=0}^{\infty}a_{k}^{2}=0,\text{ }\sum_{k=1}^{\infty}b_{k}^{2}=0,\text{
}\sum_{k=1}^{\infty}c_{k}^{2}=0,\text{ }\sum_{k=1}^{\infty}d_{k}^{2}=0.
\end{equation}

To prove the simplicity of the eigenvalue $\lambda$ corresponding, say, to
(40) we show that there is not a sequence $\{d_{k}\}$ satisfying the above $3$
equalities: (45), (41) and (47), since these equalities hold if $\lambda$ is a
multiple eigenvalue. For this we use following proposition which readily
follows from (41) and (47).

\begin{proposition}
If there exists $n\in\mathbb{N=}:\{1,2,...,\}$ such that
\begin{equation}
\left\vert d_{n}(\lambda)\right\vert ^{2}>\frac{1}{2},
\end{equation}
then $\lambda$ is a simple $AN(a)$ eigenvalue, where $a\neq0$. The statement
continues to hold for $AD(a)$, $PD(a)$ and $PN(a)$ eigenvalues if $d_{n}$ is
replaced by $c_{n}$, $b_{n}$ and $a_{n}$ respectively.
\end{proposition}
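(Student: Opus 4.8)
The statement is essentially a one-line deduction from the three constraints that a multiple eigenvalue must satisfy, so the proof is short. I would argue by contradiction: suppose $\lambda$ is an $AN(a)$ eigenvalue that is \emph{not} simple. Then by Theorem~10 the operator $N(a)$ (equivalently $A(a)$) has an associated function at $\lambda$, and the computation leading to (47) shows that the normalized eigenfunction coefficients $\{d_k\}$ satisfy simultaneously $\sum_{k\ge 1}|d_k|^2=1$ (normalization (42)) and $\sum_{k\ge 1}d_k^2=0$ (the self-adjoint-boundary-condition orthogonality relation (47)).

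**Key step.**  From $\sum_k d_k^2=0$ one extracts, for any fixed $n$, the inequality $|d_n^2|=|{-}\sum_{k\ne n}d_k^2|\le\sum_{k\ne n}|d_k|^2$. Combining this with the normalization $\sum_k|d_k|^2=1$ gives $|d_n|^2=|d_n^2|\le\sum_{k\ne n}|d_k|^2=1-|d_n|^2$, hence $|d_n|^2\le\tfrac12$. This directly contradicts the hypothesis (49), so no associated function can exist and $\lambda$ is simple.

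**Remaining points.**  I would then note that the same chain of inequalities uses nothing special about $\{d_k\}$: it applies verbatim to $\{c_k\}$, $\{b_k\}$ and $\{a_k\}$ because each of these sequences satisfies its own normalization in (42) and its own vanishing-of-the-sum-of-squares relation in (47). Hence the statement transfers to $AD(a)$, $PD(a)$ and $PN(a)$ eigenvalues with the indicated substitution. (For the $PN(a)$ case the index $k$ runs from $0$, but the inequality $|a_n|^2\le\sum_{k\ne n}|a_k|^2=1-|a_n|^2$ is unaffected.)

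**Main obstacle.**  There is essentially no obstacle: the only subtlety is making sure (47) really is available, i.e. that the argument producing $(\Psi,\overline\Psi)=0$ from (46) is legitimate. This has already been carried out in the paragraph preceding the proposition, so the proof here is just the elementary inequality $|z|\le\sum|z_k|$ applied to $z=d_n^2$ with the normalization constraint. I would write it in three or four lines and append the one-sentence remark about the other three families of eigenvalues.
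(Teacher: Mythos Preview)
Your proposal is correct and is exactly the argument the paper intends: the paper itself gives no explicit proof but merely states that the proposition ``readily follows from (41) and (47),'' and your contradiction argument via $|d_n|^2=\bigl|{-}\sum_{k\neq n}d_k^2\bigr|\le\sum_{k\neq n}|d_k|^2=1-|d_n|^2$ is precisely the intended one-line deduction. (Minor bookkeeping: the normalization is equation (41), not (42), and the hypothesis is (48); otherwise your write-up is accurate.)
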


To apply the Proposition 1, we use following lemmas.

\begin{lemma}
Suppose that $\lambda$ is a multiple $AN(a)$ eigenvalue corresponding to the
eigenfunction (40), where $a\neq0$. Then

$(a)$ For all $k\in\mathbb{N}$, $m\in\mathbb{N}$, $k\neq m$ the following
inequalities hold%
\begin{align}
\left\vert d_{k}\right\vert ^{2}  &  \leq\frac{1}{2},\\
\text{ }\left\vert d_{k}\pm d_{m}\right\vert ^{2}  &  \leq1,
\end{align}%
\begin{equation}
\left\vert d_{k}\right\vert ^{2}\leq\frac{\left\vert a\right\vert ^{2}%
}{\left\vert \lambda-(2k-1)^{2}\right\vert ^{2}}.
\end{equation}

$(b)$ If $\operatorname{Re}\lambda<(2p-1)^{2}-2\left\vert a\right\vert $ for
some $p\in\mathbb{N}$, then $\left\vert d_{k-1}\right\vert >\left\vert
d_{k}\right\vert >0$ and%
\begin{equation}
\left\vert d_{k+s}\right\vert <\frac{\left\vert 2a\right\vert ^{s+1}\left\vert
d_{k-1}\right\vert }{\left\vert \lambda-(2k-1)^{2}\right\vert \left\vert
\lambda-(2(k+1)-1)^{2}\right\vert ...\left\vert \lambda-(2(k+s)-1)^{2}%
\right\vert }%
\end{equation}
\ 

\ \ \ \ for all $k>p$ and $s=0,1,....$

$(c)$ Let $I\subset\mathbb{N}$ and $d(\lambda,I)=:\min_{k\in I}\left\vert
\lambda-(2k-1)^{2}\right\vert \neq0.$ Then
\begin{equation}
\sum_{k\in I}\left\vert d_{k}\right\vert ^{2}\leq\frac{4\left\vert
a\right\vert ^{2}}{(d(\lambda,I))^{2}}.
\end{equation}

$(d)$ If $\lambda$ is a multiple $AD(a)$ eigenvalue corresponding to the
eigenfunction (39), then the inequalities (49)-(53) continues to hold if
$d_{j}$ is replaced by $c_{j}.$
\end{lemma}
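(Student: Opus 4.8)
The plan is to exploit only the three relations that a multiple $AN(a)$ eigenvalue and its eigenfunction $(40)$ must satisfy at once: the recurrence $(45)$, the normalization $\sum_{k\ge1}|d_{k}|^{2}=1$ from $(41)$, and the orthogonality $\sum_{k\ge1}d_{k}^{2}=0$ from $(47)$. It is convenient to set $d_{0}:=-d_{1}$; then the first relation of $(45)$ becomes $(\lambda-(2\cdot1-1)^{2})d_{1}=a(d_{0}+d_{2})$, so the uniform recurrence $(\lambda-(2k-1)^{2})d_{k}=a(d_{k-1}+d_{k+1})$ holds for every $k\ge1$, with $|d_{0}|=|d_{1}|$. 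For part $(a)$, inequality $(49)$ comes from writing $d_{k}^{2}=-\sum_{j\ne k}d_{j}^{2}$, taking moduli, and using $|d_{j}^{2}|=|d_{j}|^{2}$ together with $(41)$, which gives $|d_{k}|^{2}\le1-|d_{k}|^{2}$. For $(50)$ I split $d_{j}=x_{j}+iy_{j}$: the real and imaginary parts of $(47)$ read $\sum x_{j}^{2}=\sum y_{j}^{2}$ and $\sum x_{j}y_{j}=0$, which with $(41)$ force $\|x\|^{2}=\|y\|^{2}=\tfrac12$ and $x\perp y$ in real $\ell^{2}$; hence for any real $u\in\ell^{2}$ one has $|\sum_{j}u_{j}d_{j}|^{2}=\langle u,x\rangle^{2}+\langle u,y\rangle^{2}\le\tfrac12\|u\|^{2}$ by Bessel's inequality applied to the orthonormal pair $\sqrt2\,x,\sqrt2\,y$, and choosing $u=e_{k}\pm e_{m}$ (so $\|u\|^{2}=2$, since $k\ne m$) gives $(50)$, while $u=e_{k}$ re-derives $(49)$. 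Finally $(51)$ is read off $(45)$: $|\lambda-(2k-1)^{2}|\,|d_{k}|=|a|\,|d_{k-1}+d_{k+1}|\le|a|$ by $(50)$ (for $k=1$ this is $|a|\,|d_{1}-d_{2}|\le|a|$), the estimate being vacuous if $\lambda=(2k-1)^{2}$.

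For part $(b)$, the hypothesis $\operatorname{Re}\lambda<(2p-1)^{2}-2|a|$ gives, for every $k>p$, $|\lambda-(2k-1)^{2}|\ge(2k-1)^{2}-\operatorname{Re}\lambda>2|a|$, so $(45)$ yields $|d_{k-1}|+|d_{k+1}|\ge|d_{k-1}+d_{k+1}|=|a|^{-1}|\lambda-(2k-1)^{2}|\,|d_{k}|>2|d_{k}|$. Writing $e_{k}=|d_{k}|$, this is $e_{k+1}-e_{k}>e_{k}-e_{k-1}$ for $k>p$, that is, the forward differences are strictly increasing beyond $p$; since $\sum e_{k}^{2}=1$ forces $e_{k}\to0$, a nonnegative forward difference at any index $\ge p$ would make $e_{k}\to\infty$, so all of them are negative and $e_{k+1}<e_{k}$ for all $k\ge p$. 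Combined with $e_{k}\ge0$ this excludes $e_{k_{0}}=0$ for any $k_{0}\ge p$ (it would force $e_{k_{0}+1}<0$), so $|d_{k-1}|>|d_{k}|>0$ for $k>p$. For $(52)$, $e_{k+1}<e_{k-1}$ turns $|a|^{-1}|\lambda-(2k-1)^{2}|\,e_{k}\le e_{k-1}+e_{k+1}$ into $e_{k}<2|a|\,|\lambda-(2k-1)^{2}|^{-1}e_{k-1}$ for every $k>p$; iterating this at the indices $k,k+1,\dots,k+s$ (all $>p$, where the divisor exceeds $2|a|>0$) telescopes to $(52)$.

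For part $(c)$, for each $k\in I$ the recurrence and $|\lambda-(2k-1)^{2}|\ge d(\lambda,I)$ give $|d_{k}|\le|a|\,d(\lambda,I)^{-1}(|d_{k-1}|+|d_{k+1}|)$; squaring, summing over $k\in I$, and using $(|d_{k-1}|+|d_{k+1}|)^{2}\le2(|d_{k-1}|^{2}+|d_{k+1}|^{2})$ reduces matters to bounding $\sum_{k\in I}(|d_{k-1}|^{2}+|d_{k+1}|^{2})$. Since $k\mapsto k-1$ and $k\mapsto k+1$ are injective, $\sum_{k\in I}|d_{k-1}|^{2}\le\sum_{j\ge0}|d_{j}|^{2}=1+|d_{1}|^{2}$ (using $d_{0}=-d_{1}$) and $\sum_{k\in I}|d_{k+1}|^{2}\le\sum_{j\ge2}|d_{j}|^{2}=1-|d_{1}|^{2}$, whose sum is exactly $2$; this yields $\sum_{k\in I}|d_{k}|^{2}\le4|a|^{2}d(\lambda,I)^{-2}$. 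Part $(d)$ needs no new work: the $c_{k}$ satisfy $(44)$, which under the convention $c_{0}:=c_{1}$ becomes the same uniform recurrence $(\lambda-(2k-1)^{2})c_{k}=a(c_{k-1}+c_{k+1})$ with $|c_{0}|=|c_{1}|$, and $(41)$, $(47)$ hold for $\{c_{k}\}$ as well, so the arguments above for $(49)$--$(53)$ transfer verbatim with $d_{j}$ replaced by $c_{j}$.

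The one genuinely non-routine step is $(50)$: from $(49)$ alone one only gets $|d_{k}\pm d_{m}|\le\sqrt2$, and pulling the bound down to $1$ forces one to use the orthogonality $\sum d_{j}^{2}=0$ in the form of the hidden relation "$x\perp y$ with $\|x\|=\|y\|$", which is exactly what makes the Bessel estimate tight. A secondary point to watch is the bookkeeping in $(b)$: the monotonicity must be stated as "$e_{k+1}<e_{k}$ for all $k\ge p$" (not merely for large $k$) so that nonnegativity of $e_{k}$ immediately kills vanishing coefficients, and the telescoping for $(52)$ must invoke the recurrence only at indices $>p$. Likewise, the exact equality $\sum_{k\in I}(|d_{k-1}|^{2}+|d_{k+1}|^{2})=2$ in $(c)$ hinges on the convention $d_{0}=-d_{1}$ (so $|d_{0}|^{2}=|d_{1}|^{2}$ precisely compensates the missing tail); without it the crude count gives the constant $6$ rather than $4$.
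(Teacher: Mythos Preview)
Your proof is correct and follows essentially the same plan as the paper: exploit the normalization $(41)$, the orthogonality $(47)$, and the three-term recurrence $(45)$. Parts $(a)$ for $(49)$ and $(51)$, part $(b)$, part $(c)$, and part $(d)$ are argued in the same spirit as the paper; your second-difference (convexity) phrasing in $(b)$ is a harmless reformulation of the paper's contradiction argument, and your $d_{0}=-d_{1}$ bookkeeping in $(c)$ is exactly what the paper means by ``in case $k=1$ instead of $d_{k-1}$ we take $d_{1}$''.

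The one place where you take a genuinely different route is $(50)$. You split $d_{j}=x_{j}+iy_{j}$, extract from $(47)$ the orthogonality $x\perp y$ and the equinorm condition $\|x\|=\|y\|=1/\sqrt{2}$, and then invoke Bessel's inequality for the orthonormal pair $\sqrt{2}\,x,\sqrt{2}\,y$. This is correct and conceptually pleasant---it actually proves the stronger fact $|\sum_{j}u_{j}d_{j}|^{2}\le\tfrac12\|u\|^{2}$ for \emph{every} real $u\in\ell^{2}$, not just $u=e_{k}\pm e_{m}$---but it is more machinery than the problem requires. The paper's argument is a one-liner: from $\sum_{n}d_{n}^{2}=0$ one writes
\[
(d_{k}\pm d_{m})^{2}=-\sum_{n\ne k,m}d_{n}^{2}\pm 2d_{k}d_{m},
\]
and then the triangle inequality together with $2|d_{k}d_{m}|\le|d_{k}|^{2}+|d_{m}|^{2}$ and $(41)$ gives $|d_{k}\pm d_{m}|^{2}\le\sum_{n}|d_{n}|^{2}=1$ directly. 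So the step you singled out as ``genuinely non-routine'' is in fact routine; your Bessel approach buys a slightly more general inequality at the cost of a longer argument.
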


\begin{proof}
$(a)$ If (49) does not hold for some $k,$ then by Proposition 1 $\lambda$ is a
simple eigenvalue that contradicts the assumption of the lemma.

Using the last equalities of (47) and (41), we obtain
\[
\left\vert (d_{k}\pm d_{m})^{2}\right\vert =\left\vert -\sum_{n\neq k,m}%
d_{n}^{2}\pm2d_{k}d_{m}\right\vert \leq\sum_{n\neq k,m}\left\vert
d_{n}\right\vert ^{2}+\left\vert d_{k}\right\vert ^{2}+\left\vert
d_{m}\right\vert ^{2}=1,
\]
that is, (50) holds. Now (51) follows from (45) and (50).

$(b)$ Suppose that $\left\vert d_{k}\right\vert \geq\left\vert d_{k-1}%
\right\vert $ for some $k>p>0.$ By (45)
\[
\left\vert \lambda-(2k-1)^{2}\right\vert \left\vert d_{k}\right\vert
\leq\left\vert a\right\vert \left\vert d_{k-1}\right\vert +\left\vert
a\right\vert \left\vert d_{k+1}\right\vert .
\]
On the other hand, using the condition on $\lambda$ we get $\left\vert
\lambda-(2k-1)^{2}\right\vert >2\left\vert a\right\vert .$ Therefore
\[
\left\vert d_{k+1}\right\vert \geq2\left\vert d_{k}\right\vert -\left\vert
d_{k-1}\right\vert \geq\left\vert d_{k}\right\vert .
\]
Repeating this process $s$ times we obtain $\left\vert d_{k+s}\right\vert
\geq\left\vert d_{k+s-1}\right\vert $ for all $s\in\mathbb{N}$. It means that
$\{\left\vert d_{k+s}\right\vert $ $:s\in\mathbb{N\}}$ is a nondecreasing
sequence. \ On the other hand, $\left\vert d_{k}\right\vert +\left\vert
d_{k+1}\right\vert \neq0,$ since if both $d_{k}$ and $d_{k+1}$ are zero, then
using (45) we obtain that $d_{j}=0$ for all $j\in\mathbb{N},$ that is, the
solutions \ (40) is identically zero. Therefore $d_{k}$ does not converge to
zero being the Fourier coefficient of the square integrable function
$\Psi_{AN}(x)$. This contradiction shows that $\{\left\vert d_{k+s}\right\vert
$ $:s\in\mathbb{N\}}$ is a decreasing sequence. Thus $\left\vert
d_{k}\right\vert >0$ for all $k>p.$

Now let us prove (52). Using (45) and the inequality $\left\vert
d_{k-1}\right\vert >\left\vert d_{k}\right\vert >0,$ we get
\begin{equation}
\left\vert d_{k+s}\right\vert <\frac{\left\vert 2a\right\vert \left\vert
d_{k+s-1}\right\vert }{\left\vert \lambda-(2(k+s)-1)^{2}\right\vert }%
\end{equation}
for all $s=0,1,....$ Iterating (54) $s$ times we obtain (52).

$(c)$ By (45) we have
\[
\sum_{k\in I}\left\vert d_{k}\right\vert ^{2}\leq\sum_{k\in I}\frac{\left\vert
a\right\vert ^{2}(\left\vert d_{k-1}\right\vert +\left\vert d_{k+1}\right\vert
)^{2}}{(d(\lambda,I))^{2}}\leq\sum_{k\in I}\frac{2\left\vert a\right\vert
^{2}(\left\vert d_{k-1}\right\vert ^{2}+\left\vert d_{k+1}\right\vert ^{2}%
)}{(d(\lambda,I))^{2}}.
\]
Note that in case $k=1$ instead of $d_{k-1}$ we take $d_{1}$ (see the first
equality of (45)). Now (53) follows from (41).

$(d)$ Everywhere replacing $d_{k}$ by $c_{k}$ we get the proof of the last statement
\end{proof}

In the similar way we prove the following lemma for $P(a).$

\begin{lemma}
Suppose that $\lambda$ is a multiple $PD(a)$ eigenvalue corresponding to the
eigenfunction (38), where $a\neq0$. Then

$(a)$ For all $k\in\mathbb{N}$, $m\in\mathbb{N}$, $n\in\mathbb{N}$, $n\neq m$
the following inequalities hold%
\begin{equation}
\left\vert b_{m}\right\vert ^{2}\leq\frac{1}{2},\text{ }\left\vert b_{n}\pm
b_{m}\right\vert ^{2}\leq1,\text{ }\left\vert b_{k}\right\vert ^{2}\leq
\frac{\left\vert a\right\vert ^{2}}{\left\vert \lambda-(2k)^{2}\right\vert
^{2}}.
\end{equation}

$(b)$ If $\operatorname{Re}\lambda<(2p)^{2}-2\left\vert a\right\vert $ for
some $p\in\mathbb{N}$, then $\left\vert b_{k-1}\right\vert >\left\vert
b_{k}\right\vert >0$ and%
\begin{equation}
\text{ }\left\vert b_{k+s}\right\vert <\frac{\left\vert 2a\right\vert
^{s+1}\left\vert b_{k-1}\right\vert }{\left\vert \lambda-(2k)^{2}\right\vert
\left\vert \lambda-(2(k+1))^{2}\right\vert ...\left\vert \lambda
-(2(k+s))^{2}\right\vert }%
\end{equation}

for all $k>p$ and $s=0,1,...$

$(c)$ Let $I\subset\mathbb{N}$ and $b(\lambda,I)=\min_{k\in I}\left\vert
\lambda-(2k)^{2}\right\vert \neq0.$ Then
\begin{equation}
\sum_{k\in I}\left\vert b_{k}\right\vert ^{2}\leq\frac{4\left\vert
a\right\vert ^{2}}{(b(\lambda,I))^{2}}.
\end{equation}

$(d)$ If $\lambda$ is a multiple $PN(a)$ eigenvalue corresponding to (37) then
the statements $(a)$ and $(b)$ continue to hold for $k>1$, $m\geq0$ and the
statement $(c)$ continues to hold for $I\subset\{0,1,...\}$ if $b_{j}$ is
replaced by $a_{j}.$
\end{lemma}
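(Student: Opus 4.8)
The plan is to prove Lemma 2 in complete parallel with Lemma 1, since the recursion relations (44) for $\{b_k\}$ have exactly the same structure as the relations (46) for $\{d_k\}$, only with $(2k)^2$ in place of $(2k-1)^2$ and with $ab_{k-1}+ab_{k+1}$ as the right-hand side throughout (no diagonal $\pm a$ term, which is harmless). First I would establish part $(a)$. For the bound $|b_m|^2 \le \frac12$: if it failed for some $m$, Proposition 1 (with $b_m$ in place of $d_n$) would force $\lambda$ to be a simple $PD(a)$ eigenvalue, contradicting the hypothesis. For $|b_n\pm b_m|^2\le 1$ with $n\neq m$: expand $(b_n\pm b_m)^2 = -\sum_{j\neq n,m} b_j^2 \pm 2 b_n b_m$ using the relation $\sum_k b_k^2 = 0$ from (49), then apply the triangle inequality together with $\sum_k |b_k|^2 = 1$ from (43). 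For the third inequality, solve the recursion (44) for $b_k$, namely $(\lambda-(2k)^2)b_k = a(b_{k-1}+b_{k+1})$, and combine with $|b_{k-1}|,|b_{k+1}|\le \tfrac{1}{\sqrt2}$ so that $|b_{k-1}+b_{k+1}|\le |b_{k-1}|+|b_{k+1}|$; in fact the cleanest route is to note $|b_{k-1}+b_{k+1}|^2\le 1$ directly from the already-proved second inequality (this matches how (51) is derived in Lemma 1), giving $|b_k|^2\le |a|^2/|\lambda-(2k)^2|^2$.

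Next, part $(b)$. Suppose $\operatorname{Re}\lambda < (2p)^2 - 2|a|$ for some $p$. Then for every $k>p$ we have $\operatorname{Re}\lambda < (2k)^2 - 2|a|$, hence $|\lambda-(2k)^2| \ge (2k)^2 - \operatorname{Re}\lambda > 2|a|$. Assume for contradiction that $|b_k|\ge |b_{k-1}|$ for some $k>p$; from (44), $|\lambda-(2k)^2|\,|b_k| \le |a|(|b_{k-1}|+|b_{k+1}|)$, and using $|\lambda-(2k)^2|>2|a|$ we get $|b_{k+1}| \ge 2|b_k| - |b_{k-1}| \ge |b_k|$. Iterating, $\{|b_{k+s}|\}_{s\ge 0}$ is nondecreasing. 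But $b_k + b_{k+1}\neq 0$ (otherwise (44) propagates to give $b_j=0$ for all $j$, making $\Psi_{PD}\equiv 0$), so the sequence does not tend to $0$, contradicting that $\{b_k\}$ are Fourier coefficients of $\Psi_{PD}\in L_2[0,\pi]$. Hence $|b_{k-1}|>|b_k|$ for all $k>p$, and in particular all these are positive (again because two consecutive zeros would kill the whole solution). Then from (44) and $|b_{k-1}|>|b_k|$ we get $|b_{k+s}| < |2a|\,|b_{k+s-1}| / |\lambda-(2(k+s))^2|$, and iterating $s+1$ times from index $k-1$ down yields (52).

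For part $(c)$: from (44), $|b_k| \le |a|(|b_{k-1}|+|b_{k+1}|)/|\lambda-(2k)^2| \le |a|(|b_{k-1}|+|b_{k+1}|)/b(\lambda,I)$ for $k\in I$; square, use $(x+y)^2\le 2(x^2+y^2)$, sum over $k\in I$, and bound $\sum_{k\in I}(|b_{k-1}|^2+|b_{k+1}|^2)\le 2\sum_{j}|b_j|^2 = 2$ by (43) (with the convention that for $k=1$ one replaces $b_{k-1}$ by $b_1$, matching the first equality of (44)), giving $\sum_{k\in I}|b_k|^2 \le 4|a|^2/(b(\lambda,I))^2$. Finally, for part $(d)$, the recursion (43) for $\{a_k\}$ is identical in structure except for the bottom two rows involving $a_0$ and the $\sqrt2$ factors; these only affect the treatment of the index $k\le 1$, so the same arguments give $(a)$ and $(b)$ for $k>1$, $m\ge 0$, and $(c)$ for $I\subset\{0,1,\dots\}$, with $a_j$ in place of $b_j$ and using $\sum_{k\ge 0}|a_k|^2=1$, $\sum_{k\ge 0}a_k^2=0$ from (43) and (49). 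The main obstacle is purely bookkeeping: correctly handling the boundary index (the $a_0$, $b_1$, $c_1$, $d_1$ rows of the recursions) so that the monotonicity argument and the summation in $(c)$ go through without an off-by-one error; there is no genuine analytic difficulty beyond what is already present in Lemma 1.
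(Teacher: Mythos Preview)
Your proposal is correct and follows exactly the approach the paper takes: the paper gives no separate proof of Lemma~2 but simply states that it is proved ``in the similar way'' as Lemma~1, and your write-up is precisely that parallel argument, step for step. Two cosmetic points: in part~$(b)$ you should write $|b_k|+|b_{k+1}|\neq 0$ rather than $b_k+b_{k+1}\neq 0$ (it is two \emph{consecutive zeros} that propagate, as your parenthetical already indicates), and for the $PD$ recursion the $k=1$ row of (43) simply has no $b_0$ term, so no replacement convention is needed there---but neither issue affects the validity of the argument.
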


Introduce the notation $D_{n}=\{\lambda\in\mathbb{C}:\left\vert \lambda
-(2n-1)^{2}\right\vert \leq2\left\vert a\right\vert $ $\}.$

\begin{theorem}
$(a)$ All eigenvalues of the operator $A(a)$ lie on the unions of $D_{n}$ for
$n\in\mathbb{N}$.

$(b)$ If $4n-4>(1+\sqrt{2})\left\vert a\right\vert $, where $a\neq0,$ then the
eigenvalues of $A(a)$ lying in $D_{n}$ are simple.
\end{theorem}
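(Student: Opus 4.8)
The plan is to argue entirely with the Fourier recursions for the antiperiodic eigenfunctions, treating the $AN(a)$ and $AD(a)$ cases in parallel. I will carry out the $AN(a)$ case with the coefficients $d_k$ of (40); the $AD(a)$ case is word-for-word the same with $c_k$ in place of $d_k$ and Lemma 1(d) in place of Lemma 1(c).

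For part $(a)$, let $\lambda$ be an eigenvalue of $A(a)$; by (34) its eigenfunction has the form (39) or (40), say (40), normalized as in (41) so that $\sum_{k\ge 1}|d_k|^2=1$. Since $(d_k)$ is a nonzero $\ell^2$--sequence, $\max_k|d_k|$ is attained at some $n\in\mathbb N$, and $|d_n|>0$. If $n\ge 2$, the $k$-th relation of (45) gives $|\lambda-(2n-1)^2|\,|d_n|\le|a|\,(|d_{n-1}|+|d_{n+1}|)\le 2|a|\,|d_n|$, hence $|\lambda-(2n-1)^2|\le 2|a|$, i.e. $\lambda\in D_n$. If $n=1$, the first relation of (45) rearranges to $(\lambda-1+a)d_1=a d_2$, whence $|\lambda-1+a|\le|a|$ and therefore $|\lambda-1|\le 2|a|$, i.e. $\lambda\in D_1$. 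Thus every eigenvalue of $A(a)$ lies in $\bigcup_{n\in\mathbb N}D_n$.

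For part $(b)$, fix $n$ with $4n-4>(1+\sqrt 2)|a|$ (so $n\ge 2$, since $a\neq 0$), let $\lambda\in D_n$ be an eigenvalue of $A(a)$, and suppose toward a contradiction that $\lambda$ is multiple. By Theorem 10 and (34), $\lambda$ is then a multiple $AD(a)$ or $AN(a)$ eigenvalue, so Lemma 1 applies; take the $AN(a)$ case, with coefficients $d_k$. Put $I=\mathbb N\setminus\{n\}$. For $k\in I$,
\[
|\lambda-(2k-1)^2|\ \ge\ |(2n-1)^2-(2k-1)^2|-|\lambda-(2n-1)^2|\ \ge\ 8(n-1)-2|a|,
\]
because the odd square $(2k-1)^2$ closest to $(2n-1)^2$ with $k\neq n$ is $(2n-3)^2$, at distance $(2n-1)^2-(2n-3)^2=8(n-1)$ (this is where $n\ge 2$ is used), while $|\lambda-(2n-1)^2|\le 2|a|$ since $\lambda\in D_n$. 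The hypothesis $4(n-1)>(1+\sqrt 2)|a|$ is exactly what makes $8(n-1)-2|a|>2\sqrt 2\,|a|>0$, so $d(\lambda,I)\ge 8(n-1)-2|a|>2\sqrt 2\,|a|$. Inserting this into (53) gives
\[
\sum_{k\neq n}|d_k|^2\ \le\ \frac{4|a|^2}{(d(\lambda,I))^2}\ <\ \frac{4|a|^2}{8|a|^2}\ =\ \frac12 ,
\]
so by the normalization (41), $|d_n|^2=1-\sum_{k\neq n}|d_k|^2>\tfrac12$; by Proposition 1 (equivalently, this contradicts (49)) $\lambda$ is simple, contrary to assumption. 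The $AD(a)$ case is identical with $c_k$ for $d_k$ and Lemma 1(d). Hence every eigenvalue of $A(a)$ in $D_n$ is simple.

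The argument is short once Section 2 and Lemma 1 are available, so I do not expect a genuine obstacle. The only points requiring care are the separate treatment of the $n=1$ end of the recursion in part $(a)$, the elementary identification of $8(n-1)$ as the minimal gap between two distinct odd squares $(2n-1)^2$ and $(2k-1)^2$ (which is where $n\ge 2$, hence the threshold on $n$, enters), and checking that the equivalence $8(n-1)-2|a|>2\sqrt 2\,|a|\iff 4n-4>(1+\sqrt 2)|a|$ reproduces the stated constant exactly.
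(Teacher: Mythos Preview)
Your proof is correct and follows essentially the same approach as the paper: for (a) you pick the index where $|d_k|$ is maximal and apply the recursion (45), and for (b) you bound $d(\lambda,\mathbb N\setminus\{n\})$ from below via the nearest odd square, invoke (53), and conclude with Proposition 1. The only minor difference is that you treat the case $n=1$ in part (a) explicitly via the rearrangement $(\lambda-1+a)d_1=a d_2$, whereas the paper subsumes it into the single remark that ``(a) follows from (45) for $k=n$''; both are valid, yours is slightly more careful.
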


\begin{proof}
By (34) if $\lambda$ is an eigenvalues of the operator $A(a),$ then the
corresponding eigenfunction is either $\Psi_{AN}(x)$ or $\Psi_{AD}(x)$ (see
(39) or (40)). Without loss of generality, we assume that the corresponding
eigenfunction is $\Psi_{AN}(x)$.

$(a)$ Since $d_{k}\rightarrow0$ as $k\rightarrow\infty,$ there exists
$n\in\mathbb{N}$ such that
\[
\left\vert d_{n}\right\vert =\max_{k\in\mathbb{N}}\left\vert d_{k}\right\vert
.
\]
Therefore $(a)$ follows from (45) for $k=n.$

$(b)$ Suppose that $\lambda\in D_{n\text{ }}$ is a multiple eigenvalue
corresponding to the eigenfunction $\Psi_{AN}(x).$ By definition of
$D_{n\text{ }}$ for $k\neq n$ we have
\[
\left\vert \lambda-(2k-1)^{2}\right\vert \geq\mid(2n-1)^{2}-(2k-1)^{2}%
\mid-\left\vert 2a\right\vert \geq\left\vert (2n-3)^{2}-(2n-1)^{2}\right\vert
-\left\vert 2a\right\vert .
\]
This together with the condition on $n$ and the definition of \ $d(\lambda,I)$
(see Lemma 1(c)) gives $d(\lambda,\mathbb{N}\backslash\{n\})>2\sqrt
{2}\left\vert a\right\vert .$ Thus, using (53) and (41) we get%
\[
\sum_{k\neq n}\left\vert d_{k}\right\vert ^{2}<\frac{1}{2}\text{ }\And\text{
}\left\vert d_{n}\right\vert ^{2}>\frac{1}{2}%
\]
which contradicts Proposition 1.
\end{proof}

Instead of Lemma 1 using Lemma 2 in the same way we prove the following

\begin{theorem}
$(a)$ All $PD(a)$ eigenvalues lie in the unions of $B=:\{\lambda:\left\vert
\lambda-4\right\vert \leq\left\vert a\right\vert $ $\}$ and

$B_{n}=:\{\lambda:\left\vert \lambda-(2n)^{2}\right\vert \leq2\left\vert
a\right\vert $ $\}$ for $n=2,3,.....$ All $PN(a)$ eigenvalues lie in the
unions of $A_{0}=\{\lambda:\left\vert \lambda\right\vert \leq\sqrt
{2}\left\vert a\right\vert $ $\},$ $A_{1}=\{\lambda:\left\vert \lambda
-4\right\vert \leq(1+\sqrt{2})\left\vert a\right\vert $ $\}$ and $B_{n}$ for
$n=2,3,....$

$(b)$ If $4n-2>(1+\sqrt{2})\left\vert a\right\vert $ and $n>1,$ where
$a\neq0,$ then the eigenvalues of $P(a)$ lying in $B_{n}$ are simple.
\end{theorem}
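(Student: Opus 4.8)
The plan is to repeat the proof of the previous theorem (the $A(a)$ version) almost verbatim, using Lemma 2 in place of Lemma 1 and the discs $B$, $B_n$, $A_0$, $A_1$ in place of $D_n$, and invoking Proposition 1 at the end.

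For part $(a)$: by (33) every eigenvalue $\lambda$ of $P(a)$ is a $PD(a)$ or a $PN(a)$ eigenvalue, hence the corresponding eigenfunction is $\Psi_{PD}$ of (38) or $\Psi_{PN}$ of (37). If it is $\Psi_{PD}$, then since $b_k\to 0$ (Fourier coefficients of an $L_2$-function) there is $n\in\mathbb N$ with $|b_n|=\max_k|b_k|$; the first relation of (43) for $n=1$ gives $|\lambda-4|\,|b_1|=|a|\,|b_2|\le|a|\,|b_1|$, so $\lambda\in B$, while the three-term relation of (43) for $n\ge2$ gives $|\lambda-(2n)^2|\,|b_n|\le|a|(|b_{n-1}|+|b_{n+1}|)\le2|a|\,|b_n|$, so $\lambda\in B_n$. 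If the eigenfunction is $\Psi_{PN}$, pick $n\ge0$ with $|a_n|=\max_{k\ge0}|a_k|$ and read off (42): $n=0$ gives $|\lambda|\le\sqrt2\,|a|$ ($\lambda\in A_0$), $n=1$ gives $|\lambda-4|\le(1+\sqrt2)|a|$ ($\lambda\in A_1$), and $n\ge2$ gives $\lambda\in B_n$.

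For part $(b)$: fix $n>1$ with $4n-2>(1+\sqrt2)|a|$ and suppose, for contradiction, that some $\lambda\in B_n$ is a multiple eigenvalue of $P(a)$; by (33) it is a multiple $PD(a)$ or $PN(a)$ eigenvalue with eigenfunction $\Psi_{PD}$ or $\Psi_{PN}$. The arithmetic input is that $(2n-2)^2$ is the point of $\{(2k)^2:k\ge0,\ k\ne n\}$ closest to $(2n)^2$, with
\[
(2n)^2-(2n-2)^2=4(2n-1)=8n-4,
\]
since $k\mapsto n^2-k^2$ decreases on $\{0,\dots,n-1\}$ and $k\mapsto k^2-n^2$ increases on $\{n+1,n+2,\dots\}$; in particular the value $4n^2$ at $k=0$ equals $(8n-4)+4(n-1)^2\ge8n-4$, so including $k=0$ in the Neumann case does not shrink the gap when $n>1$. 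Hence for every admissible $k\ne n$, using $\lambda\in B_n$,
\[
|\lambda-(2k)^2|\ge|(2n)^2-(2k)^2|-|\lambda-(2n)^2|\ge(8n-4)-2|a|>2(1+\sqrt2)|a|-2|a|=2\sqrt2\,|a|,
\]
the strict step using $8n-4=2(4n-2)>2(1+\sqrt2)|a|$. Thus, with $I=\mathbb N\setminus\{n\}$ (resp.\ $I=\{0,1,\dots\}\setminus\{n\}$), Lemma 2(c) (resp.\ Lemma 2(d)) together with the normalization (41) yield
\[
\sum_{k\ne n}|b_k|^2\le\frac{4|a|^2}{(b(\lambda,I))^2}<\frac{4|a|^2}{8|a|^2}=\frac12,\qquad |b_n|^2>\frac12,
\]
which by Proposition 1 (inequality (48)) forces $\lambda$ to be simple---a contradiction; hence no multiple eigenvalue lies in $B_n$.

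I do not expect a genuine obstacle: given Lemma 2 and Proposition 1, the whole argument is a transcription of the previous theorem with $d_k$ replaced by $b_k$ (resp.\ $a_k$) and $D_n$ by $B_n$. The only point requiring a moment's attention is the elementary gap estimate $\min_{k\ne n}|(2n)^2-(2k)^2|=8n-4$, and in the Neumann case checking that the extra index $k=0$ is harmless for $n>1$, which is immediate from $4n^2-(8n-4)=4(n-1)^2\ge0$.
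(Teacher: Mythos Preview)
Your proposal is correct and follows exactly the approach the paper intends: the paper's own proof of this theorem is simply the one-line remark ``Instead of Lemma 1 using Lemma 2 in the same way we prove the following,'' and what you have written is precisely that transcription of the proof of Theorem 11, with the extra (and appropriate) care in the Neumann case of verifying that the index $k=0$ does not shrink the gap when $n>1$.
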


Now we prove the main result for $A(a).$

\begin{theorem}
If $0<\left\vert a\right\vert \leq\frac{8}{\sqrt{6}},$ then the all
eigenvalues of the operator $A(a)$ are simple.
\end{theorem}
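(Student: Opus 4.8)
The plan is to combine Theorem~13(b) with a direct estimate handling the finitely many discs $D_n$ not covered by that theorem. By Theorem~13(a) every eigenvalue of $A(a)$ lies in $\bigcup_{n\in\mathbb{N}} D_n$, where $D_n = \{\lambda : |\lambda-(2n-1)^2| \leq 2|a|\}$, and by Theorem~13(b) all eigenvalues lying in $D_n$ are simple as soon as $4n-4 > (1+\sqrt 2)|a|$. With $|a| \leq \tfrac{8}{\sqrt 6}$ one checks that $(1+\sqrt 2)|a| \leq (1+\sqrt 2)\tfrac{8}{\sqrt 6} < 8$, so the condition $4n-4 > (1+\sqrt 2)|a|$ holds for all $n \geq 4$ (since then $4n-4 \geq 12 > 8$), and in fact for $n=3$ as well since $4\cdot 3 - 4 = 8$. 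So the only discs requiring separate attention are $D_1$ and $D_2$, i.e.\ eigenvalues near $1$ and near $9$.

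For these remaining discs I would argue by contradiction as in the proof of Theorem~13(b): suppose $\lambda \in D_1 \cup D_2$ is a multiple $AN(a)$ eigenvalue (the $AD(a)$ case is identical by Lemma~1(d), replacing $d_k$ by $c_k$), with eigenfunction $\Psi_{AN}$ normalized by $\sum_k |d_k|^2 = 1$. The strategy is to show that $\sum_{k\geq 3}|d_k|^2 < \tfrac12$ and $|d_1|^2 + |d_2|^2 \leq 1$ cannot coexist with $\sum_k d_k^2 = 0$ from (47); more directly, I would show one of $|d_1|^2, |d_2|^2$ exceeds $\tfrac12$, contradicting Proposition~1. Using Lemma~1(c) with $I = \mathbb{N}\setminus\{1,2\}$: for $\lambda \in D_1 \cup D_2$ and $k \geq 3$ we have $|\lambda - (2k-1)^2| \geq (2k-1)^2 - 9 - 2|a| \geq 25 - 9 - 2|a| = 16 - 2|a|$, so $d(\lambda, I) \geq 16 - 2|a|$. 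Since $|a| \leq \tfrac{8}{\sqrt6}$ gives $2|a| \leq \tfrac{16}{\sqrt6} < 6.54$, we get $d(\lambda,I) > 9$, hence by Lemma~1(c), $\sum_{k\geq 3}|d_k|^2 \leq \tfrac{4|a|^2}{(d(\lambda,I))^2} \leq \tfrac{4|a|^2}{(16-2|a|)^2}$. Plugging $|a|^2 \leq \tfrac{64}{6}$ and $16 - 2|a| \geq 16 - \tfrac{16}{\sqrt6}$ one verifies this bound is $< \tfrac12$. Therefore $|d_1|^2 + |d_2|^2 > \tfrac12$.

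It remains to rule out the case where the mass is split between $d_1$ and $d_2$ so that neither alone exceeds $\tfrac12$. Here I would use the relation (45) tying $d_1$ and $d_2$ together, namely $(\lambda-1)d_1 = -ad_1 + ad_2$ and $(\lambda-9)d_2 = ad_1 + ad_3$, to derive a lower bound on one of $|d_1|,|d_2|$ in terms of the other that, combined with $|d_1|^2+|d_2|^2 > \tfrac12$, forces one of them past $\tfrac12$. From the second relation, $|d_2| \leq \tfrac{|a|(|d_1| + |d_3|)}{|\lambda - 9|}$ is only useful when $\lambda$ is far from $9$; when $\lambda \in D_2$ this fails, so instead one uses that $\lambda \in D_2$ is far from $1$: then $|\lambda - 1| \geq 16 - 2|a|$ again forces $|d_1|$ small via the first relation of (45), $|d_1| \leq \tfrac{|a|(|d_1|+|d_2|)}{|\lambda-1|-|a|}$ rearranged, pushing all the mass onto $d_2$ so $|d_2|^2 > \tfrac12$; symmetrically if $\lambda \in D_1$, far from $9$, then $|d_2|$ is small and $|d_1|^2 > \tfrac12$. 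Either way Proposition~1 is contradicted. The main obstacle I anticipate is the bookkeeping in this last paragraph: near the boundary where $D_1$ and $D_2$ are both relevant one must track the cross-terms carefully and confirm that the numeric constant $\tfrac{8}{\sqrt6}$ is exactly what makes every inequality close; the choice of $\tfrac{8}{\sqrt6}$ presumably comes precisely from optimizing the $n=2$ estimate, so the arithmetic there will be the tight spot.
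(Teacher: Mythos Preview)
Your reduction to the discs $D_1$ and $D_2$ is fine and matches the paper. The gap is in your last paragraph, and it is not just bookkeeping.

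First, an arithmetic slip: for $\lambda\in D_2$ one has $|\lambda-1|\ge 8-2|a|$, not $16-2|a|$. With $|a|$ near $8/\sqrt6\approx 3.27$ this gives $|\lambda-1|\gtrsim 1.47$, which is far too weak to force $|d_1|$ small via the first relation in (45). Indeed, rearranging $(\lambda-1+a)d_1=ad_2$ requires a lower bound on $|\lambda-1+a|$, and $8-3|a|$ is \emph{negative} at $|a|=8/\sqrt6$. More structurally, for $|a|>2$ the discs $D_1$ and $D_2$ overlap, so ``$\lambda\in D_2$ is far from $1$'' and its symmetric counterpart both fail in the overlap. Your tail estimate only gives $|d_1|^2+|d_2|^2>1-0.48$, and nothing you have written prevents the mass being split roughly evenly, with neither exceeding $1/2$.

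What is missing is precisely the mechanism that produces the constant $8/\sqrt6$. The paper does \emph{not} try to make one of $|d_1|^2,|d_2|^2$ exceed $1/2$ in the hard region; instead it exploits the relation $\sum_k d_k^2=0$ together with $|d_1|^2+|d_2|^2\approx 1$ to conclude $d_2/d_1=\pm i+\delta$ with $\delta$ small. Then dividing the first equality of (45) by $d_1$ and the second by $d_2$ and subtracting eliminates $\lambda$ and yields
\[
\frac{8}{a}=\pm 2i-1+\gamma-\frac{d_3}{d_2},
\]
where $\gamma$ and $d_3/d_2$ are controlled by sharper tail estimates (Lemma~1(b), not just Lemma~1(c)). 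Since $|\pm 2i-1|=\sqrt5$, the contradiction with $|8/a|\ge\sqrt6$ is exactly what fixes the threshold. Your proposal never uses $\sum d_k^2=0$ beyond Proposition~1, and without that phase information the argument cannot close.
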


\begin{proof}
Since $8>\frac{8}{\sqrt{6}}(1+\sqrt{2}),$ by Theorem 11(b) the ball
$D_{n\text{ }}$ for $n>2$ does not contain the multiple eigenvalues of the
operator $A(a).$ Therefore we need to prove that the ball $D_{n\text{ }}$ for
$n=1,2$ also does not contain the multiple eigenvalues. Since the balls
$D_{1\text{ }}$ and $D_{2\text{ }}$ are contained in the half plane
$\{\lambda\in\mathbb{C}:\operatorname{Re}\lambda<16$ $\}$ we consider the
following two strips $\{\lambda\in\mathbb{C}:9<\operatorname{Re}\lambda<16$
$\}$, $\{\lambda\in\mathbb{C}:6<\operatorname{Re}\lambda\leq9$ $\}$ and half
plane $\{\lambda\in\mathbb{C}:\operatorname{Re}\lambda\leq6$ $\}$ separately.
We consider the $AN(a)$ eigenvalues, that is, the eigenvalues corresponding to
the eigenfunction (40). Consideration of the $AD(a)$ eigenvalues are the same.

To prove the simplicity of the eigenvalues lying in the above strips, we
assume that $\lambda$ is a multiple eigenvalue. Using Lemma 1 by direct
calculating (see Estimation 1 and Estimation 2 in Appendix) we show that (48)
for $n=2$ holds that contradicts Proposition 1.

Investigation the half plane $\operatorname{Re}\lambda\leq6$ is more
complicated. Here we use \ the first two equalities of (45)
\begin{equation}
(\lambda-1)d_{1}=-ad_{1}+ad_{2},\text{ }(\lambda-9)d_{2},=ad_{1}+ad_{3}.
\end{equation}
By direct calculating we get (see Estimation 3 and Estimation 4 in the
Appendix)
\begin{equation}
\sum_{k=3}^{\infty}\left\vert d_{k}\right\vert ^{2}<0.03\,\allowbreak
415,\text{ }\frac{\left\vert d_{3}\right\vert }{\left\vert d_{2}\right\vert
}<0.174\,32
\end{equation}
Then by (41) we have%
\begin{equation}
\left\vert d_{1}\right\vert ^{2}+\left\vert d_{2}\right\vert ^{2}%
>1-\varepsilon,
\end{equation}
where $\varepsilon=0.03\,\allowbreak415.$ On the other hand, by (49),
$\left\vert d_{1}\right\vert ^{2}\leq\frac{1}{2},$ $\left\vert d_{2}%
\right\vert ^{2}\leq\frac{1}{2}.$ These inequalities and (47) imply that%
\[
\left\vert d_{1}\right\vert ^{2}=\frac{1}{2}-\varepsilon_{1},\text{
}\left\vert d_{2}\right\vert ^{2}=\frac{1}{2}-\varepsilon_{2},\text{ }%
d_{2}^{2}=-\text{ }d_{1}^{2}+\varepsilon_{3},
\]
where $\varepsilon_{1}\geq0,$ $\varepsilon_{2}\geq0,$ $\varepsilon
_{1}+\varepsilon_{2}=\varepsilon,$ $\left\vert \varepsilon_{3}\right\vert
<0.03\,\allowbreak415.$ Now, one can easily see that
\[
(\frac{d_{2}}{d_{1}})^{2}=-1+\alpha,\frac{d_{2}}{d_{1}}=\pm(i+\delta),
\]
where $\left\vert \alpha\right\vert <\frac{0.03\,\allowbreak415}%
{0.5-0.03\,\allowbreak415}<0.074,$ $\left\vert \delta\right\vert <\frac{1}%
{2}\left\vert 0.074\right\vert +\frac{1}{7}\left\vert 0.074\right\vert
^{2}\allowbreak<0.0\,4$. Therefore we have
\begin{equation}
\frac{d_{2}}{d_{1}}-\frac{d_{1}}{d_{2}}=\pm\frac{(i+\delta)^{2}-1}{i+\delta
}=\pm\frac{2i(i+\delta)+\delta^{2}}{i+\delta}=\pm2i+\gamma,
\end{equation}
where $\left\vert \gamma\right\vert <\frac{(0.04)^{2}}{1-0.04}<0.002.$ On the
other hand, dividing the first equality of (58) by $d_{1}$ and the second by
$d_{2}$ and then subtracting second from the first and taking into account
(61) we get
\begin{equation}
\frac{8}{a}=\pm2i-1+\gamma-\frac{d_{3}}{d_{2}},
\end{equation}
where by assumption $\left\vert \frac{8}{a}\right\vert \geq\sqrt{6}.$
Therefore using the second estimation of (59) in (62) we get the contradiction

$2.\,\allowbreak449\,5<\sqrt{6}\leq\left\vert \frac{8}{a}\right\vert <\sqrt
{5}+0.174\,32+0.002<\allowbreak2.\,\allowbreak412\,5$
\end{proof}

In the same way we consider the simplicity of the eigenvalues of the operators
$P(a),$ $D(a)$ and $N(a).$ \ First let us investigate the eigenvalues of
$D(a).$ \ Since the eigenvalues of $D(a)$ is the union of $PD(a)$ and $AD(a)$
eigenvalues and the $AD(a)$ eigenvalues are investigated in Theorem 13, we
investigate the $PD(a)$ eigenvalue.

\begin{theorem}
If $0<\left\vert a\right\vert \leq5,$ then all $PD(a)$ eigenvalues are simple.
Moreover, if 

$0<\left\vert a\right\vert \leq\frac{8}{\sqrt{6}},$ then the all eigenvalues
of the operator $D(a)$ are simple.
\end{theorem}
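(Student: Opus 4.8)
The structure mirrors the proof of Theorem 13. The claim ``all eigenvalues of $D(a)$ are simple for $0<|a|\le 8/\sqrt6$'' follows at once from the first assertion together with Theorem 13: indeed $\sigma(D(a))$ is the union of the $PD(a)$ and $AD(a)$ eigenvalues (see (35)), the $AD(a)$ eigenvalues are simple for $|a|\le 8/\sqrt6$ by Theorem 13, and $8/\sqrt6<5$, so the $PD(a)$ eigenvalues are covered by the first assertion. Hence the real content is: \emph{if $0<|a|\le 5$ then every $PD(a)$ eigenvalue is simple.} The plan is to assume, for contradiction, that $\lambda$ is a multiple $PD(a)$ eigenvalue with normalized eigenfunction $\Psi_{PD}(x)=\sum b_k\sin 2kx$, so that the three systems (45), (41) and (47) (the analogues for $b_k$) all hold, and to derive a bound incompatible with $|a|\le 5$.

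First I would use Theorem 12(a): all $PD(a)$ eigenvalues lie in $B\cup\bigcup_{n\ge2}B_n$, where $B=\{|\lambda-4|\le|a|\}$ and $B_n=\{|\lambda-(2n)^2|\le 2|a|\}$. By Theorem 12(b), for $n$ with $4n-2>(1+\sqrt2)|a|$ and $n>1$ the eigenvalues in $B_n$ are already simple; with $|a|\le5$ this disposes of all $B_n$ with $n\ge 4$ (since $4\cdot4-2=14>(1+\sqrt2)\cdot5\approx12.07$), so only $B$, $B_2$ and $B_3$ remain, all contained in the half-plane $\{\operatorname{Re}\lambda\le 40\}$. As in Theorem 13 I would split this region into strips of the form $\{(2k)^2<\operatorname{Re}\lambda\le(2(k+1))^2\}$ for $k=2,3$ plus the half-plane $\{\operatorname{Re}\lambda\le 16\}$, and in each strip show, using Lemma 2, that $|b_m|^2>\tfrac12$ for a suitable index $m$ (the one closest to $\operatorname{Re}\lambda$), contradicting Proposition 1. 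For the strips to the right this is the ``Estimation''-style bookkeeping: Lemma 2(c) gives $\sum_{k\in I}|b_k|^2\le 4|a|^2/(b(\lambda,I))^2$ for tails $I$ far from $\operatorname{Re}\lambda$, Lemma 2(b) controls the far tail geometrically, and the remaining finitely many coefficients near $\operatorname{Re}\lambda$ are estimated from (45); summing the away-from-$m$ contributions to something $<\tfrac12$ forces $|b_m|^2>\tfrac12$.

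The delicate part, exactly as in Theorem 13, is the half-plane $\operatorname{Re}\lambda\le 16$, where the two relevant indices are $b_1$ and $b_2$ and neither dominates. Here I would use the first two equations of (45) for the $b_k$, namely $(\lambda-4)b_1=ab_2$ and $(\lambda-16)b_2=ab_1+ab_3$; establish via Lemma 2 and the geometric tail bound that $\sum_{k\ge3}|b_k|^2<\varepsilon$ and $|b_3|/|b_2|<\eta$ for explicit small constants $\varepsilon,\eta$; conclude $|b_1|^2+|b_2|^2>1-\varepsilon$; combine with $|b_1|^2,|b_2|^2\le\tfrac12$ and the self-adjointness relation $b_1^2+b_2^2+\cdots=0$ from (47) to get $b_2/b_1=\pm(i+\delta)$ with $|\delta|$ small; and finally divide the first equation by $b_1$, the second by $b_2$, subtract, and read off
\[
\frac{\lambda-4}{a}-\frac{\lambda-16}{a}=\frac{b_2}{b_1}-\frac{b_1}{b_2}+\frac{b_3}{b_2},
\]
i.e.\ $12/a=\pm 2i+(\text{small})$, whence $|12/a|<2.something$, contradicting $|12/a|\ge 12/5=2.4$ once the error terms are pinned down below $0.4$. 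The main obstacle is arranging the numerology so that the accumulated errors $\varepsilon,\delta,\eta$ are genuinely small enough that $|12/a|\ge 2.4$ is violated — this is where the constant $5$ (rather than something larger) is forced, and it requires the same careful, but routine, ``Estimation 1–4'' computations used for $A(a)$, now adapted to the shifts $4$ and $16$ appearing in (45)–(46) for $b_k$.
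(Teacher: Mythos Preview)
Your proposal is correct and follows essentially the same approach as the paper: reduce to $PD(a)$ via (35) and Theorem~13, dispose of $B_n$ for $n\ge4$ via Theorem~12(b), handle the intermediate strips by showing $|b_m|^2>\tfrac12$ from Lemma~2, and in the leftmost half-plane combine the first two recursions in (43) with (47) to force $12/a\approx\pm 2i$, contradicting $|12/a|\ge 2.4$. The paper's numerical cuts are at $\operatorname{Re}\lambda=12,16,26$ (with $B_3$ treated separately for $26\le\operatorname{Re}\lambda\le46$) rather than your $16,36$, your ``$\le 40$'' should be ``$\le 46$'', and the sign in your displayed identity should be $-b_3/b_2$; these are cosmetic and do not affect the argument.
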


\begin{proof}
The second statement follows from the first statement and Theorem 13.
Therefore we need to prove the first statement by using (43). Since
$14>5(1+\sqrt{2}),$ by Theorem 12,\ the $PD(a)$ eigenvalues lying in the ball
$B_{n\text{ }}$ for $n>3$ are simple.

If $\lambda\in B_{3},$ then $26\leq\operatorname{Re}\lambda\leq46$. Using
Lemma 2 and (41) we obtain the estimations (see Estimation 5 in Appendix)
\[
\sum_{k\neq3}\left\vert b_{k}\right\vert ^{2}<\frac{1}{2},\text{ }\left\vert
b_{3}\right\vert ^{2}>\frac{1}{2}%
\]
which, by Proposition 1, proves the simplicity of the $PD(a)$ eigenvalues
lying in $B_{3}$.

Now we need to prove that the balls $B$ and $B_{2\text{ }}$ does not contain
the multiple $PD(a)$ eigenvalues. Since these balls are contained in the strip
$\{\lambda\in\mathbb{C}:\operatorname{Re}\lambda\leq26$ $\}$ we consider the
following cases: $16<\operatorname{Re}\lambda\leq26,$ $12<\operatorname{Re}%
\lambda\leq16$ and $\operatorname{Re}\lambda\leq12.$

In the first two cases using Lemma 2 we get the inequality (see Estimation 6
and Estimation 7) obtained from (48) for $n=2$ by replacing $d_{n\text{ }}$
with $b_{n}$ which proves, by Proposition 1, the simplicity of the eigenvalues.

Now consider the third case $\operatorname{Re}\lambda\leq12.$ Using Lemma 2 we
obtain (see Estimation 8 and Estimation 9 in Appendix)%
\begin{equation}
\sum_{k=3}^{\infty}\left\vert b_{k}\right\vert ^{2}<\frac{1}{15},\text{ }%
\frac{\left\vert b_{3}\right\vert }{\left\vert b_{2}\right\vert }<0.213\,1
\end{equation}
The first inequality of (63) with (41) implies that
\begin{equation}
\left\vert b_{1}\right\vert ^{2}+\left\vert b_{2}\right\vert ^{2}>1-\beta,
\end{equation}
where $\beta<\allowbreak\frac{1}{15}.$ Instead of (60) using (64) and
repeating the proof of (61) we obtain
\begin{equation}
\frac{b_{2}}{b_{1}}-\frac{b_{1}}{b_{2}}=\frac{(i+\delta)^{2}-1}{i+\delta
}=\frac{2i(i+\delta)+\delta^{2}}{i+\delta}=\pm2i+\gamma_{1},
\end{equation}
where $\left\vert \gamma_{1}\right\vert <0.01.$ Now dividing the first
equality of (43) by $b_{1}$ and the second equality of (43) for $k=2$ by
$b_{2}$ and then subtracting second from the first and using (65) we get
\begin{equation}
\frac{12}{a}=\pm2i+\gamma_{1}-\frac{b_{3}}{b_{2}},
\end{equation}
where by assumption $\left\vert \frac{12}{a}\right\vert \geq2.4.$ Thus, using
(63) in (66) we get the contradiction

$2.\,\allowbreak4\leq\left\vert \frac{12}{a}\right\vert
<2+0.213\,1+0.01=\allowbreak2.\,\allowbreak223\,1$
\end{proof}

\begin{theorem}
If $0<\left\vert a\right\vert \leq\frac{4}{3},$ then the all eigenvalues of
the operators $P(a)$ and $N(a)$ are simple.
\end{theorem}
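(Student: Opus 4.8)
The plan is to reduce the assertion to the simplicity of the $PN(a)$ eigenvalues and then to imitate, with the recursion (42) and Lemma 2(d) playing the roles of their antiperiodic and Dirichlet analogues, the scheme used in Theorems 13 and 14. By (33) the spectrum of $P(a)$ is the union of the $PD(a)$ and the $PN(a)$ eigenvalues, and by (36) the spectrum of $N(a)$ is the union of the $PN(a)$ and the $AN(a)$ eigenvalues. Since $\frac43<5$, Theorem 14 gives that every $PD(a)$ eigenvalue is simple, and since $\frac43<\frac{8}{\sqrt6}$, Theorem 13 gives that every eigenvalue of $A(a)$, hence every $AN(a)$ eigenvalue, is simple. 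So it suffices to prove that, for $0<|a|\le\frac43$, every $PN(a)$ eigenvalue is simple.

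By Theorem 12 every $PN(a)$ eigenvalue lies in $A_0\cup A_1\cup\bigcup_{n\ge2}B_n$, and for $n\ge2$ one has $4n-2\ge6>(1+\sqrt2)|a|$, so Theorem 12(b) disposes of the balls $B_n$, $n\ge2$. It remains to show that $A_0\cup A_1\subseteq\{\operatorname{Re}\lambda\le4+(1+\sqrt2)|a|\}\subset\{\operatorname{Re}\lambda<8\}$ contains no multiple $PN(a)$ eigenvalue. Split this set by a suitable constant $c$ into $\{c<\operatorname{Re}\lambda<8\}$ and $\{\operatorname{Re}\lambda\le c\}$ (the latter containing $A_0$). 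On $\{c<\operatorname{Re}\lambda<8\}$, assume $\lambda$ is a multiple $PN(a)$ eigenvalue with eigenfunction (37); from the first relation of (42), $\lambda a_0=\sqrt2\,a\,a_1$, together with $|a_1|^2\le\frac12$ (Lemma 2(d)) one gets $|a_0|^2=\frac{2|a|^2}{|\lambda|^2}|a_1|^2\le\frac{|a|^2}{(\operatorname{Re}\lambda)^2}$, and from the analogue of Lemma 2(c) with $I=\{2,3,\dots\}$ one gets $\sum_{k\ge2}|a_k|^2\le\frac{4|a|^2}{(16-\operatorname{Re}\lambda)^2}$. With $c$ slightly larger than $2$ the sum of these two bounds stays below $\frac12$ throughout the strip, so $|a_1|^2=1-|a_0|^2-\sum_{k\ge2}|a_k|^2>\frac12$, contradicting Proposition 1; if a single constant $c$ does not make the numbers work, I would split this strip into two, exactly as in the proofs of Theorems 13 and 14.

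The remaining half-plane $\{\operatorname{Re}\lambda\le c\}$ is the main obstacle, because there $\lambda$ may be arbitrarily close to $0$ and no single Fourier coefficient dominates; here I would argue as in the proof of Theorem 13 for $\{\operatorname{Re}\lambda\le6\}$, using the first two relations of (42), namely $\lambda a_0=\sqrt2\,a\,a_1$ and $(\lambda-4)a_1=\sqrt2\,a\,a_0+a\,a_2$. Assuming $\lambda$ to be a multiple $PN(a)$ eigenvalue, Lemma 2(d) provides explicit estimates (of the type carried out in the Appendix for Theorems 13 and 14) showing that $\eta:=\sum_{k\ge2}|a_k|^2$ and $|a_2|/|a_1|$ are small and that $|a_2|\le|a|/|\lambda-16|$; combining these with (41), (47) and $|a_0|^2,|a_1|^2\le\frac12$ one obtains $(a_1/a_0)^2=-1+\alpha$ with $|\alpha|$ small, hence $\frac{a_1}{a_0}-\frac{a_0}{a_1}=\pm2i+\gamma$ with $|\gamma|$ small. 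Dividing the first relation by $a_0$, the second by $a_1$ and subtracting yields
\[
\frac{4}{a}=\sqrt2\Big(\frac{a_1}{a_0}-\frac{a_0}{a_1}\Big)-\frac{a_2}{a_1}=\pm2\sqrt2\,i+\sqrt2\,\gamma-\frac{a_2}{a_1},
\]
and since $0<|a|\le\frac43$ forces $|4/a|\ge3$ while the right-hand side has modulus at most $2\sqrt2+\sqrt2|\gamma|+|a_2|/|a_1|$, one reaches a contradiction as soon as the total error is below $3-2\sqrt2\approx0.1716$. The hard part is precisely that this margin is narrow, so the bounds for $\eta$, $|\alpha|$ (hence $|\gamma|$) and $|a_2|/|a_1|$ from Lemma 2(d) must be used in their sharpest form; as in Theorems 13 and 14 it may be necessary to cut $\{\operatorname{Re}\lambda\le c\}$ into one or two thin strips plus a half-plane so that on each piece $16-\operatorname{Re}\lambda$ (which controls all three error terms) is large enough. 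Nothing extra is needed for $N(a)$, since $\sigma(N(a))$ is the union of the $PN(a)$ and $AN(a)$ eigenvalues, both now simple.
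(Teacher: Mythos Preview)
Your proposal is correct and follows essentially the same approach as the paper: reduce to $PN(a)$ eigenvalues via Theorems 13 and 14, dispose of $B_n$ for $n\ge2$ by Theorem~12(b), split $A_0\cup A_1\subset\{\operatorname{Re}\lambda<8\}$ into a strip where $|a_1|^2>\tfrac12$ and a half-plane where the identity $\tfrac{4}{a}=\sqrt2\bigl(\tfrac{a_1}{a_0}-\tfrac{a_0}{a_1}\bigr)-\tfrac{a_2}{a_1}$ yields a contradiction with $|4/a|\ge3$. The paper's proof carries this out with the single cutoff $c=3$ (no further subdivision is needed), obtaining $|\gamma|<0.0006$ and $|a_2/a_1|<0.10301$ from the explicit Estimations~10--12 in the Appendix, so that $2\sqrt2+\sqrt2\,|\gamma|+|a_2/a_1|<2.933<3$.
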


\begin{proof}
By Theorem 13 and Theorem 14 we need to prove that if $\left\vert a\right\vert
\leq\frac{4}{3},$ then all $PN(a)$ eigenvalues are simple. Since
$6>(1+\sqrt{2})\frac{4}{3},$ by Theorem 12,\ the $PN(a)$ eigenvalues lying in
the ball $B_{n\text{ }}$ for $n>1$ are simple.

Now we prove that the balls $A_{0\text{ }}$ and $A_{1\text{ }}$ does not
contain the multiple $PN(a)$ eigenvalues. Since these balls are contained in
$\{\lambda\in\mathbb{C}:\operatorname{Re}\lambda<8$ $\}$ we consider the
following cases:

Case 1: $3\leq\operatorname{Re}\lambda<8.$ Using (42) and Lemma 2 (see
Estimation 10 in Appendix) we obtain $\left\vert a_{1}\right\vert ^{2}%
>\frac{1}{2}$ which, by Proposition 1, proves the simplicity of the eigenvalues.

Case 2: $\operatorname{Re}\lambda<3.$ Using Lemma 2 we obtain ( see
Estimations 11 and 12 in Appendix)%
\begin{equation}
\sum_{k=2}^{\infty}\left\vert a_{k}\right\vert ^{2}<\frac{1}{58},\text{ }%
\frac{\left\vert a_{2}\right\vert }{\left\vert a_{1}\right\vert }<0.103\,01
\end{equation}
The first inequality of (67) with (41) implies that
\begin{equation}
\left\vert a_{0}\right\vert ^{2}+\left\vert a_{1}\right\vert ^{2}>1-\rho,
\end{equation}
where $\rho<\frac{1}{58}.$ Instead of (60) using (68) and repeating the proof
of (61) we obtain%
\begin{equation}
\frac{a_{1}}{a_{0}}-\frac{a_{0}}{a_{1}}=\pm2i+\gamma,
\end{equation}
where $\left\vert \gamma\right\vert <0.0006.$ Now dividing the first equality
of (42) by $a_{0}$ and the second by $a_{1}$ and then subtracting second from
the first and taking into account (69) we get
\begin{equation}
\frac{4}{a}=\pm2\sqrt{2}i+\sqrt{2}\gamma-\frac{a_{2}}{a_{1}},
\end{equation}
where by assumption $\left\vert \frac{4}{a}\right\vert \geq3.$ Therefore using
(67) we get the contradiction

$3\leq\left\vert \frac{4}{a}\right\vert <\sqrt{2}%
(2+0.0006)+0.103\,01=\allowbreak2.\,\allowbreak932\,3$
\end{proof}

\section{Appendix}

\textbf{Estimation 1}: Let $9<\operatorname{Re}\lambda<16$. \bigskip By (51)
we have
\[
\left\vert d_{1}\right\vert ^{2}\leq\frac{\left\vert a\right\vert ^{2}%
}{\left\vert \lambda-1\right\vert ^{2}}\leq\frac{\left\vert \frac{8}{\sqrt{6}%
}\right\vert ^{2}}{\left\vert 8\right\vert ^{2}}=\frac{1}{6},\text{
}\left\vert d_{3}\right\vert ^{2}\leq\frac{\left\vert a\right\vert ^{2}%
}{\left\vert \lambda-25\right\vert ^{2}}\leq\frac{\left\vert \frac{8}{\sqrt
{6}}\right\vert ^{2}}{\left\vert 9\right\vert ^{2}}=\frac{32}{243}.
\]
Since $d(\lambda,\{4,5,...\})<33$ using (53) we get%
\[
\sum_{k=4}^{\infty}\left\vert d_{k}\right\vert ^{2}<\frac{4\left\vert \frac
{8}{\sqrt{6}}\right\vert ^{2}}{\left\vert 33\right\vert ^{2}}=\frac{128}%
{3267}.
\]
These inequalities imply that
\[
\sum_{k\neq2}\left\vert d_{k}\right\vert ^{2}<\frac{128}{3267}+\frac{32}%
{243}+\frac{1}{6}=\frac{19\,849}{58\,806}<\frac{1}{2}.
\]

\textbf{Estimation 2.} Let $6<\operatorname{Re}\lambda\leq9.$\bigskip\ By
(51)
\[
\left\vert d_{1}\right\vert ^{2}\leq\frac{\left\vert \frac{8}{\sqrt{6}%
}\right\vert ^{2}}{\left\vert 5\right\vert ^{2}}=\frac{32}{75},\text{
}\left\vert d_{3}\right\vert ^{2}\leq\frac{\left\vert \frac{8}{\sqrt{6}%
}\right\vert ^{2}}{\left\vert 16\right\vert ^{2}}=\frac{1}{24}.
\]
Now using the obvious equality $d(\lambda,\{4,5,...\})\leq40$ and (53), we get%
\[
\sum_{k=4}^{\infty}\left\vert d_{k}\right\vert ^{2}\leq\frac{4\left\vert
\frac{8}{\sqrt{6}}\right\vert ^{2}}{\left\vert 40\right\vert ^{2}}=\frac
{2}{75},\text{ }\sum_{k\neq2}\left\vert d_{k}\right\vert ^{2}\leq\frac{32}%
{75}+\frac{1}{24}+\frac{2}{75}=\frac{99}{200}<\frac{1}{2}.
\]

\textbf{Estimation 3.} Let $\operatorname{Re}\lambda\leq6.$\bigskip\ By (52)
and (49) we have%
\begin{equation}
\left\vert d_{4}\right\vert \leq\frac{\left\vert 2\times\frac{8}{\sqrt{6}%
}\right\vert ^{2}\left\vert d_{2}\right\vert }{\left\vert 43\right\vert
\left\vert 19\right\vert }\leq\frac{\left\vert 2\times\frac{8}{\sqrt{6}%
}\right\vert ^{2}\frac{\sqrt{2}}{2}}{\left\vert 43\right\vert \left\vert
19\right\vert },\text{ }\left\vert d_{5}\right\vert \leq\frac{\left\vert
2\times\frac{8}{\sqrt{6}}\right\vert ^{3}\left\vert d_{2}\right\vert
}{\left\vert 75\right\vert \left\vert 43\right\vert \left\vert 19\right\vert
}\leq\frac{\left\vert 2\times\frac{8}{\sqrt{6}}\right\vert ^{3}\frac{\sqrt{2}%
}{2}}{\left\vert 75\right\vert \left\vert 43\right\vert \left\vert
19\right\vert }.
\end{equation}
Now using (51) and (53) and taking into account $d(\lambda,\{6,7,...\})\leq
115$ we obtain%
\[
\left\vert d_{3}\right\vert ^{2}\leq\frac{\left\vert \frac{8}{\sqrt{6}%
}\right\vert ^{2}}{\left\vert 19\right\vert ^{2}}=\frac{32}{1083}\text{ }%
\And\text{ }\sum_{k=6}^{\infty}\left\vert d_{k}\right\vert ^{2}\leq
\frac{4\left\vert \frac{8}{\sqrt{6}}\right\vert ^{2}}{\left\vert
115\right\vert ^{2}}.
\]
These inequalities imply that
\[
\sum_{k=3}^{\infty}\left\vert d_{k}\right\vert ^{2}=\frac{32}{1083}+\left(
\frac{\left\vert 2\times\frac{8}{\sqrt{6}}\right\vert ^{2}\frac{\sqrt{2}}{2}%
}{\left\vert 43\right\vert \left\vert 19\right\vert }\right)  ^{2}+\left(
\frac{\left\vert 2\times\frac{8}{\sqrt{6}}\right\vert ^{3}\frac{\sqrt{2}}{2}%
}{\left\vert 75\right\vert \left\vert 43\right\vert \left\vert 19\right\vert
}\right)  ^{2}+\frac{4\left\vert \frac{8}{\sqrt{6}}\right\vert ^{2}%
}{\left\vert 115\right\vert ^{2}}<0.03\,\allowbreak415
\]

\textbf{Estimation 4. }\ Now we estimate $\frac{\left\vert d_{3}\right\vert
}{\left\vert d_{2}\right\vert }$ for $\operatorname{Re}\lambda\leq6$.
Iterating (45) for $k=3,$ we get%
\begin{equation}
d_{3}=\frac{ad_{2}+ad_{4}}{\lambda-25}=\frac{ad_{2}}{\lambda-25}+\frac
{a}{\lambda-25}(\frac{ad_{3}+ad_{5}}{\lambda-49})
\end{equation}%
\[
=\frac{ad_{2}}{\lambda-25}+\frac{a^{3}d_{2}}{(\lambda-25)^{2}(\lambda
-49)}+\frac{a^{3}d_{4}}{(\lambda-25)^{2}(\lambda-49)}+\frac{a^{2}d_{5}%
}{(\lambda-25)(\lambda-49)}.
\]
Therefore, dividing both sides of (72) by $d_{2}$ and using (52) we obtain
\[
\frac{\left\vert d_{3}\right\vert }{\left\vert d_{2}\right\vert }\leq
\frac{\frac{8}{\sqrt{6}}}{19}+\frac{\left\vert \frac{8}{\sqrt{6}}\right\vert
^{3}}{\left\vert 43\right\vert \left\vert 19\right\vert ^{2}}+\frac
{4\left\vert \frac{8}{\sqrt{6}}\right\vert ^{5}}{\left\vert 43\right\vert
^{2}\left\vert 19\right\vert ^{3}}+\frac{8\left\vert \frac{8}{\sqrt{6}%
}\right\vert ^{5}}{\left\vert 75\right\vert \left\vert 43\right\vert
^{2}\left\vert 19\right\vert ^{2}}\leq0.174\,32
\]

\textbf{Estimation 5. }Let $26\leq\operatorname{Re}\lambda\leq46.$ Using (56)
and (58) we obtain
\[
\left\vert b_{1}\right\vert ^{2}\leq\frac{\left\vert a\right\vert ^{2}%
}{\left\vert \lambda-4\right\vert ^{2}}\leq\frac{\left\vert 5\right\vert ^{2}%
}{\left\vert 22\right\vert ^{2}}=\frac{25}{484},\text{ }\left\vert
b_{2}\right\vert ^{2}\leq\frac{\left\vert a\right\vert ^{2}}{\left\vert
\lambda-16\right\vert ^{2}}\leq\frac{\left\vert 5\right\vert ^{2}}{\left\vert
10\right\vert ^{2}}=\frac{1}{4},
\]%
\[
\left\vert b_{4}\right\vert ^{2}\leq\frac{\left\vert a\right\vert ^{2}%
}{\left\vert \lambda-64\right\vert ^{2}}\leq\frac{\left\vert 5\right\vert
^{2}}{\left\vert 18\right\vert ^{2}}=\frac{25}{324},\text{ }\sum_{k=5}%
^{\infty}\left\vert b_{k}\right\vert ^{2}\leq\frac{4\left\vert 5\right\vert
^{2}}{\left\vert 54\right\vert ^{2}}=\frac{25}{729}.
\]
Thus
\[
\sum_{k\neq3}\left\vert b_{k}\right\vert ^{2}\leq\frac{25}{484}+\frac{1}%
{4}+\frac{25}{324}+\frac{25}{729}=\frac{145\,759}{352\,836}<\frac{1}{2}.
\]

\textbf{Estimation 6.} Let $16<\operatorname{Re}\lambda\leq26.$ By (55) and
(57) we have
\[
\left\vert b_{1}\right\vert ^{2}\leq\frac{\left\vert a\right\vert ^{2}%
}{\left\vert \lambda-4\right\vert ^{2}}\leq\frac{\left\vert 5\right\vert ^{2}%
}{\left\vert 12\right\vert ^{2}}=\frac{25}{144},\text{ }\left\vert
b_{3}\right\vert ^{2}\leq\frac{\left\vert a\right\vert ^{2}}{\left\vert
\lambda-36\right\vert ^{2}}\leq\frac{\left\vert 5\right\vert ^{2}}{\left\vert
10\right\vert ^{2}}=\frac{1}{4},
\]%
\[
\sum_{k=4}^{\infty}\left\vert b_{k}\right\vert ^{2}\leq\frac{4\left\vert
5\right\vert ^{2}}{\left\vert 38\right\vert ^{2}}=\frac{25}{361},\text{ }%
\sum_{k\neq2}\left\vert b_{k}\right\vert ^{2}\leq\frac{25}{144}+\frac{1}%
{4}+\frac{25}{361}=\frac{25\,621}{51\,984}<\frac{1}{2}.
\]

\textbf{Estimation 7.} Let$12<\operatorname{Re}\lambda\leq16.$ By (55) and
(57)
\[
\left\vert b_{1}\right\vert ^{2}\leq\frac{\left\vert 5\right\vert ^{2}%
}{\left\vert 8\right\vert ^{2}}=\frac{25}{64},\text{ }\left\vert
b_{3}\right\vert ^{2}\leq\frac{\left\vert 5\right\vert ^{2}}{\left\vert
20\right\vert ^{2}}=\frac{1}{16},\text{ }\left\vert b_{4}\right\vert ^{2}%
\leq\frac{\left\vert 5\right\vert ^{2}}{\left\vert 48\right\vert ^{2}}%
=\frac{25}{2304},
\]%
\[
\sum_{k=5}^{\infty}\left\vert b_{k}\right\vert ^{2}\leq\frac{4\left\vert
5\right\vert ^{2}}{\left\vert 84\right\vert ^{2}}=\frac{25}{1764},\text{ }%
\sum_{k\neq2}\left\vert b_{k}\right\vert ^{2}\leq\frac{25}{64}+\frac{1}%
{16}+\frac{25}{2304}+\frac{25}{1764}=\frac{53\,981}{112\,896}<\frac{1}{2}.
\]

\textbf{Estimation 8. }Let $\operatorname{Re}\lambda\leq12.$ By (55) and (57)
we have \textbf{ }%
\[
\left\vert b_{4}\right\vert ^{2}\leq\frac{\left\vert 5\right\vert ^{2}%
}{\left\vert 52\right\vert ^{2}}=\frac{25}{2704},\text{ }\left\vert
b_{3}\right\vert ^{2}\leq\frac{\left\vert 5\right\vert ^{2}}{\left\vert
24\right\vert ^{2}}=\frac{25}{576},\text{ }%
\]%
\[
\sum_{k=5}^{\infty}\left\vert b_{k}\right\vert ^{2}\leq\frac{4\left\vert
5\right\vert ^{2}}{\left\vert 88\right\vert ^{2}}=\frac{25}{1936},\text{ }%
\sum_{k=3}^{\infty}\left\vert b_{k}\right\vert ^{2}\leq\frac{25}{2704}%
+\frac{25}{576}+\frac{25}{1935}=\frac{30\,495}{465\,088}<\allowbreak\frac
{1}{15}.
\]

\textbf{Estimation 9. }Here we estimate $\frac{\left\vert b_{3}\right\vert
}{\left\vert b_{2}\right\vert }$ for $\operatorname{Re}\lambda\leq12.$
Iterating (43) for $k=3,$ we get\textbf{ }%
\begin{equation}
b_{3}=\frac{ab_{2}+ab_{4}}{\lambda-36}=\frac{ab_{2}}{\lambda-36}+\frac
{a}{\lambda-36}(\frac{ab_{3}+ab_{5}}{\lambda-64})
\end{equation}%
\[
=\frac{ab_{2}}{\lambda-36}+\frac{a^{3}b_{2}}{(\lambda-36)^{2}(\lambda
-64)}+\frac{a^{3}b_{4}}{(\lambda-36)^{2}(\lambda-64)}+\frac{a^{2}b_{5}%
}{(\lambda-36)(\lambda-64)}.
\]
Now dividing both sides of (73) by $b_{2}$ and using (56) we obtain
\[
\frac{\left\vert b_{3}\right\vert }{\left\vert b_{2}\right\vert }\leq\frac
{5}{24}+\frac{\left\vert 5\right\vert ^{3}}{\left\vert 52\right\vert
\left\vert 24\right\vert ^{2}}+\frac{4\left\vert 5\right\vert ^{5}}{\left\vert
52\right\vert ^{2}\left\vert 24\right\vert ^{3}}+\frac{8\left\vert
5\right\vert ^{5}}{\left\vert 88\right\vert \left\vert 52\right\vert
^{2}\left\vert 24\right\vert ^{2}}<0.213\,1
\]

\textbf{Estimation 10. }Let $3\leq\operatorname{Re}\lambda<8.$ By (42), Lemma
2($d$) and (55)
\[
\left\vert a_{0}\right\vert ^{2}\leq\frac{\left\vert \sqrt{2}aa_{1}\right\vert
^{2}}{\left\vert \lambda\right\vert ^{2}}\leq\frac{\left\vert \frac{4}%
{3}\right\vert ^{2}}{\left\vert 3\right\vert ^{2}}=\frac{16}{81},\text{
}\left\vert a_{2}\right\vert ^{2}\leq\frac{\left\vert a\right\vert ^{2}%
}{\left\vert \lambda-16\right\vert ^{2}}\leq\frac{\left\vert \frac{4}%
{3}\right\vert ^{2}}{\left\vert 8\right\vert ^{2}}=\frac{1}{36},
\]%
\[
\sum_{k=3}^{\infty}\left\vert a_{k}\right\vert ^{2}\leq\frac{4\left\vert
\frac{4}{3}\right\vert ^{2}}{\left\vert 28\right\vert ^{2}}=\frac{4}%
{441},\text{ }\sum_{k\neq1}\left\vert a_{k}\right\vert ^{2}\leq\frac{16}%
{81}+\frac{1}{36}+\frac{4}{441}<\frac{1}{2}.
\]

\textbf{Estimation 11. }Let $\operatorname{Re}\lambda<3.$ By Lemma 2($d$),
(55) and (57) we have\textbf{ }%
\[
\left\vert a_{2}\right\vert ^{2}\leq\frac{\left\vert a\right\vert ^{2}%
}{\left\vert \lambda-16\right\vert ^{2}}\leq\frac{\left\vert \frac{4}%
{3}\right\vert ^{2}}{\left\vert 13\right\vert ^{2}}=\frac{16}{1521},\text{
}\sum_{k=3}^{\infty}\left\vert a_{k}\right\vert ^{2}\leq\frac{4\left\vert
\frac{4}{3}\right\vert ^{2}}{\left\vert 33\right\vert ^{2}}=\frac{64}{9801},
\]%
\[
\sum_{k=2}^{\infty}\left\vert a_{k}\right\vert ^{2}\leq\frac{16}{1521}%
+\frac{64}{9801}<\frac{1}{58}.
\]

\textbf{Estimation 12. }Here we estimate $\frac{a_{2}}{a_{1}}$ for
$\operatorname{Re}\lambda<3$. Iterating (42) for $k=2,$ we get%
\begin{equation}
a_{2}=\frac{aa_{1}+aa_{3}}{\lambda-16}=\frac{aa_{1}}{\lambda-16}+\frac
{a}{\lambda-16}(\frac{aa_{2}+aa_{4}}{\lambda-36})
\end{equation}%
\[
=\frac{aa_{1}}{\lambda-16}+\frac{a^{3}a_{1}}{(\lambda-16)^{2}(\lambda
-36)}+\frac{a^{3}a_{3}}{(\lambda-16)^{2}(\lambda-36)}+\frac{a^{2}a_{4}%
}{(\lambda-16)(\lambda-36)}.
\]
Now dividing both sides of (74) by $a_{1}$ and using Lemma 2($d$), (56) we
obtain
\[
\frac{\left\vert a_{2}\right\vert }{\left\vert a_{1}\right\vert }\leq
\frac{\frac{4}{3}}{13}+\frac{\left\vert \frac{4}{3}\right\vert ^{3}%
}{\left\vert 33\right\vert \left\vert 13\right\vert ^{2}}+\frac{4\left\vert
\frac{4}{3}\right\vert ^{5}}{\left\vert 33\right\vert ^{2}\left\vert
13\right\vert ^{3}}+\frac{8\left\vert \frac{4}{3}\right\vert ^{5}}{\left\vert
61\right\vert \left\vert 33\right\vert ^{2}\left\vert 13\right\vert ^{2}%
}<0.103\,01
\]

\end{document}